\documentclass{article}

\usepackage[english]{babel}

\usepackage[letterpaper,top=2cm,bottom=2cm,left=3cm,right=3cm,marginparwidth=1.75cm]{geometry}

\usepackage{amsmath, amsthm, amssymb}
\usepackage{graphicx}
\usepackage[colorlinks=true, allcolors=blue]{hyperref}

\usepackage{natbib}
\usepackage{thmtools}
\declaretheorem[name=Theorem,numbered=no]{reptheorem}
\usepackage{authblk}

\renewcommand{\Pr}{\field{P}}

\newcommand{\ba}{\boldsymbol{a}}

\newcommand{\bg}{\boldsymbol{g}}

\newcommand{\bx}{\boldsymbol{x}}

\newcommand{\by}{\boldsymbol{y}}

\newcommand{\bxi}{\boldsymbol{\xi}}

\newcommand{\bzero}{\boldsymbol{0}}

\newcommand{\btheta}{\boldsymbol{\theta}}

\newcommand{\field}[1]{\mathbb{#1}}

\newcommand{\R}{\field{R}}

\newcommand{\E}{\field{E}}

\newcommand\inn[2]{ \left\langle {#1} \,,\, {#2} \right\rangle }

\newcommand{\norm}[1]{\left\|{#1}\right\|}



\newcommand{\F}{\mathcal{F}}

\newcommand{\hn}{\widehat{\nabla}f}
\newtheorem{theorem}{Theorem}[section]
\newtheorem{corollary}{Corollary}[theorem]
\newtheorem{lemma}[theorem]{Lemma}
\newtheorem{remark}[theorem]{Remark}

\newtheorem{assumption}{Assumption}
\newtheorem{proposition}{Propositon}

\title{Dual Averaging Converges for Nonconvex Smooth Stochastic Optimization}

\author[1]{Tuo Liu\thanks{\texttt{tuo.liu@kaust.edu.sa}}}
\author[1]{El Mehdi Saad\thanks{\texttt{mehdi.saad@kaust.edu.sa}}}
\author[2]{Wojciech Kotłowski\thanks{\texttt{wkotlowski@cs.put.poznan.pl}}}
\author[1]{Francesco Orabona\thanks{\texttt{francesco@orabona.com}}}

\affil[1]{King Abdullah University of Science and Technology, Saudi Arabia}
\affil[2]{Poznan University of Technology, Poland}

\begin{document}

\maketitle

\begin{abstract}
Dual averaging and gradient descent with their stochastic variants stand as the two canonical recipe books for first-order optimization: Every modern variant can be viewed as a descendant of one or the other.
In the convex regime, these algorithms have been deeply studied, and we know that they are essentially equivalent in terms of theoretical guarantees. 
On the other hand, in the non-convex setting, the situation is drastically different: While we know that SGD can minimize the gradient of non-convex smooth functions, no finite-time complexity guarantee for Stochastic Dual Averaging (SDA) was known in the same setting.
In this paper, we close this gap by a reduction that views SDA as SGD applied to a sequence of implicitly regularized objectives.
We show that a tuned SDA exhibits a rate of convergence $\mathcal{O}(1 / T + \sigma \log T/ \sqrt{T})$, similar to that of SGD under the same assumptions.
To our best knowledge, this is the first complete convergence theory for dual averaging on non-convex smooth stochastic problems without restrictive assumptions, closing a long-standing open problem in the field.
Beyond the base algorithm, we also discuss ADA-DA, a variant that marries SDA with AdaGrad's auto-scaling, which achieves the same rate without requiring knowledge of the noise variance. 
\end{abstract}

\section{Introduction}
\label{sec:intro}

Stochastic first-order methods have become the computational workhorse for large-scale optimization problems.  
Among these methods, \emph{Dual Averaging} (DA) offers a principled alternative to classical Stochastic Gradient Descent (SGD) by updating the primal iterate through a running average of past gradients.
Although a decade of work has thoroughly characterized DA in convex settings, its behaviour on objectives that are smooth and possibly non-convex remains poorly understood, leaving open the question of whether DA can attain the $\mathcal{O}(T^{-1/2})$ complexity to minimize the expected squared norm of the gradients enjoyed by well-tuned SGD in the same setting.

One might wonder why studying DA in the unconstrained setting is worthwhile. Indeed, a very common misunderstanding is that DA and gradient descent only differ in the constrained setting. In fact, the two methods already diverge in the unconstrained setting, because they incorporate new sub-gradients in fundamentally different ways. The origins of DA trace back to Nesterov's seminal work~\citep{Nesterov09}, where the author observed that the stepsize conditions required for subgradient descent,
\[
\eta_t > 0, \quad \eta_t \to 0, \quad \sum_{t=1}^{\infty} \eta_t = \infty,
\]
necessarily make ``New subgradients enter the model with decreasing weights''~\citet[page 4]{Nesterov09}, contradicting the principle of recursive methods where fresher information should outweigh stale data. More formally, when initialized at $\bx_1 = \mathbf{0}$, gradient descent and DA generate the iterates\footnote{\citet{Nesterov09} also proposed the possibility in DA to weigh each single gradient by an additional parameter $\lambda_i$, in addition to $\eta_t$. While potentially useful in the deterministic setting, this idea has been abandoned in the stochastic setting~\citep[see, e.g., ][]{Xiao10}, hence we do not consider it here.}
\[
\bx^{\mathrm{GD}}_{t+1} = -\sum_{i=1}^{t} \eta_i \, \bg_i, \qquad \bx^{\mathrm{DA}}_{t+1} = -\eta_t \sum_{i=1}^{t} \bg_i,
\]
where $\bg_i$ is a subgradient of $f$ at $\bx_i$.  
The effective weight assigned to the latest gradient is therefore \(\eta_t / \bigl(\sum_{i=1}^{t} \eta_i\bigr)\) 
for subgradient descent, but \(1/t\) for DA.  
Because the sequence $(\eta_t)$ is decreasing, the factor \(1/t\) is larger; as a result DA places comparatively greater emphasis on the latest information. This property might be particularly valuable in non-convex landscapes, where earlier gradient directions may oppose the current descent direction, even in low-noise regimes (e.g., with the Rosenbrock function). Empirical evidence in~\cite{JelassiD20} corroborates this intuition, showing that DA-based optimizers seem to match or outperform their SGD counterparts on deep learning training tasks.

Hence, the gap in the theoretical literature on DA and the empirical evidence raises the question of whether it is possible to show that DA can minimize the gradient of smooth, possibly non-convex, functions.

\paragraph{Our contribution.}
We provide the first iterate-level guarantees for DA on \emph{smooth, possibly non-convex} objectives.  
For deterministic step sizes of order $1/\sqrt{t}$, our bounds recover the optimal $\mathcal{O}(T^{-1/2})$ rate previously established for SGD~\citep{GhadimiL12}, under the strong-growth (affine noise) assumption. In addition, we derive a high-probability bound under the assumption of sub-Gaussian noise.
We then address the question of designing an adaptive variant. When neither the smoothness constant nor the noise level is known, and the learner employs AdaGrad-style rates~\citep{McMahanS10,DuchiHS11}, we prove convergence bounds that depend solely on the maximal iterate norm.  Whenever the iterates remain bounded, the standard optimal rates follow immediately.

\paragraph{Organization of the paper.}
The remainder of the paper is organized as follows. Section~\ref{sec:related} surveys related work on dual averaging and its variants. Section~\ref{sec:prob-setup} formalizes the optimization framework and introduces our notation. Section~\ref{sec:strong-growth} establishes convergence rates for DA with deterministic step sizes of order~$1/\sqrt{t}$. In Section \ref{sec:high-prob}, we provide high-probability bounds. Finally, in Section~\ref{sec:ada-DA}, we extend the analysis to the parameter-free setting, deriving bounds for AdaGrad-style adaptive step sizes. 

\section{Related work}\label{sec:related}

Dual Averaging (DA) was introduced by \citet{Nesterov09} for the deterministic case. The extension of DA to the stochastic case was done in \citet{Xiao10}, focusing on the case of composite losses.
These algorithms are also known in the online convex optimization setting~\citep{Gordon99b,Zinkevich03,Orabona19}, a generalization of the stochastic setting, as Follow-The-Regularized-Leader with linear losses~\citep{Gordon99b,ShalevS06,ShalevS06b,AbernethyHR08,HazanK08}.
All these analyses are for convex functions.

In the non-convex setting, the literature on dual averaging is very sparse.
Clearly, when the learning rate is constant, SGD and DA coincide in the unconstrained setting. In this specific case, the seminal paper of \citet{GhadimiL12} shows that SGD/DA with the optimal constant learning rate give a $\mathcal{O}(\frac{1}{T}+\frac{\sigma}{\sqrt{T}})$ convergence in expectation for the squared norm of the gradient of a random iterate. Under conditions of sub-Gaussianity of the noise, they also proposed a two-stage procedure to select an iterate with small gradient with high probability.
\citet{LiO19} proved that SGD on non-convex smooth functions adapts to the noise level when using a delayed version of the AdaGrad-norm stepsizes~\citep{StreeterM10}, achieving the same rate above without knowledge of $\sigma$. Later, \citet{WardWB19} proved a similar guarantee for the original AdaGrad-norm stepsizes, but under the stronger assumption of bounded gradients. To the best of our knowledge, no results of this form are known for the DA version of AdaGrad~\citep{McMahanS10,DuchiHS11}.

\citet{SuggalaN20} show that follow-the-perturbed-leader, a variant of the follow-the-regularized-leader where the regularization is achieved through noise, with a minimization oracle can be used for online learning with non-convex losses. They also show that non-randomized algorithms cannot achieve vanishing average regret in the same setting.
There are analyses for modified versions of DA, where the gradients are rescaled by a ``learning rate'' \citep[see, e.g., ][]{OrabonaP21}. However, these variations essentially imitate the behavior of SGD in having a decreasing weight for the stochastic gradients, defeating the core motivation of DA of having equal weights for all gradients in the iterates.

\cite{JelassiD20} provide empirical evidence that DA works well for the stochastic optimization of smooth non-convex functions. They also provide a convergence rate, but their bound requires that the norm of the iterates is bounded by some constant $R$, moreover it fails to recover fast rates $\mathcal{O}(1/T)$ in the low noise regime $\sigma=0$.

More recently, \citet{ChenH24} proposed the open problem that stochastic smooth optimization can be directly reduced to online learning. If true, it would imply that the standard regret analysis of DA would immediately give us a convergence guarantee for stochastic smooth optimization. However, as far as we know, this is still an open problem.

\section{Problem Set-up} 
\label{sec:prob-setup}

We consider the unconstrained optimization problem
\[
    \min_{\bx \in \R^{d}} f(\bx),
\]
where $f: \R^{d} \to \R$ is the objective function. Let $\hn(\bx)$ denote the noisy gradient at the point $\bx$. For the iterates of stochastic dual averaging (SDA), we set $\bg_t := \hn(\bx_t)$ and denote by $\bxi_t$ the corresponding noise. Our analysis focuses on the convergence of SDA:
\begin{equation} \label{eq:DA-def}
    \bx_{t+1} = - \eta_{t} \sum_{i=1}^{t} \bg_t, \quad \bxi_{t} := \bg_t - \nabla f(\bx_t) , \quad t \geq 1,
\end{equation}
where $\eta_{t}$ is a sequence of step sizes and $\hn(\bx_{t})$ is a gradient oracle with noise $\bxi_{t}$. When the step sizes are constant, we recover the SGD algorithm. If in addition $\bxi_{t} = \bzero$ almost surely, we recover to the classic dual averaging algorithm. The distance between solutions in $\R^{d}$ is measured by the $\ell_{2}$ norm. In this paper, depending on the specific theorem, we use a subset of the following assumptions.

\begin{assumption}[$L$-smoothness]
\label{ass:smoothness}
The function $f \colon \R^{d} \to \R$ is differentiable and there exists a constant $L > 0$ such that for all $\bx, \by \in \R^{d}$:
\begin{equation}
\label{eq: smoothness}
    f(\by) \leq f(\bx) + \inn{\nabla f(\bx)}{\by - \bx} + \frac{L}{2} \norm{\by - \bx}^{2}. \tag{A1}
\end{equation}
\end{assumption}

\begin{assumption}[Lower boundedness]
\label{ass:lower-boundedness}
$f$ admits a finite lower bound, i.e.,
\begin{equation} 
\label{eq:lower-boundedness}
    \inf_{\bx \in \R^{d}} \ f(\bx) := f^{*} > -\infty. \tag{A2}
\end{equation}
\end{assumption}

\begin{assumption}[Unbiased estimator]
\label{ass:unbiased-estimator}
We have access to a history of independent, unbiased gradient estimator $\hn(\bx)$ for any $\bx \in \R^{d}$,
\begin{equation} \label{eq:unbiased-estimator}
    \E\left[\hn(\bx) \mid \bx\right] = \nabla f(\bx). \tag{A3}
\end{equation}
\end{assumption}

In most classical analyses of stochastic first-order methods one typically assumes access to an oracle with uniformly bounded variance (BV) \citep{harold1997stochastic}, i.e., $\E\left[\norm{\nabla f(\bx) - \hn(\bx)}^{2}\right] \leq \sigma^{2}$ for every $\bx$, yet it is provably too restrictive, even for the basic mini-batch least-squares problem, where gradient noise grows with the iterate norm so no finite global $\sigma^2$ is admissible \citep{jain2018parallelizing,zhang2019stochastic}.
Hence, we use the following assumption.

\begin{assumption}[Strong growth condition (SGC) with additive noise]
\label{ass:strong-growth}
The oracle $\hn(\cdot)$ satisfies the
strong growth condition with parameter $(\rho, \sigma)$, i.e.,
\begin{equation}
\label{eq:strong-growth}
    \E\left[\norm{\hn(\bx) - \nabla f(\bx)}^{2}\right] \leq \rho \norm{\nabla f(\bx)}^{2} + \sigma^{2}, \quad \forall \bx \in \R^{d}. \tag{A4}
\end{equation}
\end{assumption}
This assumption on the noise is similar to the one used in \citet{schmidt2013fast,bottou2010large,bottou2018optimization}. It \emph{strictly generalizes} the standard BV assumption (for which only $\rho = 0$ is admitted). By allowing the stochastic error to scale with the signal, the admissible $\sigma^2$ can be chosen to reflect only the residual noise at the optimum; in practice this constant is often orders of magnitude smaller than the global bound required when $\rho = 0$, leading to larger stable stepsizes and tighter convergence guarantees. Subsequent work \citep{mishkin2020interpolation,vaswani2019fast,solodkin2024accelerated} showed that many over-parameterized (interpolating) models satisfy SGC, and leveraged the condition to match accelerated deterministic rates without variance-reduction techniques.

\begin{assumption}[Sub-Gaussian noise]
\label{ass:subgaussian-noise}
$\norm{\hn(\bx) - \nabla f(\bx)}$ is a $\sigma$-sub-Gaussian random variable. There are several equivalent definitions of sub-Gaussian random variables up to an absolute constant scaling (see, e.g., Proposition 2.5.2 in \citet{vershynin2018high}). For convenience, we adopt the following definition. 
\begin{equation} 
\label{eq:subgaussian-noise}
    \E\left[\exp\left(\lambda^{2} \norm{\hn(\bx) - \nabla f(\bx)}^{2}\right) \mid \bx\right] 
    \leq \exp\left(\lambda^{2} \sigma^{2}\right), \text{ for all } \lambda \text{ such that }\left|\lambda\right| \leq \frac{1}{\sigma}. \tag{A5}
\end{equation}
\end{assumption}

Our results depend on the standard Assumptions~\ref{eq: smoothness}-\ref{eq:strong-growth}. For high probability guarantees we use Assumption~\ref{eq:subgaussian-noise} instead of Assumption~\ref{eq:strong-growth}.

\section{Bounds in expectation under the strong growth condition}
\label{sec:strong-growth}

In this section, we study the convergence of SDA, under the $(\rho, \sigma)$-strong growth condition, when using decreasing step sizes of the form  
\begin{equation}
\label{eq:eta}
    \eta_t = \frac{1}{L(1+\rho) (1+\rho + \alpha \sqrt{t})}~,
\end{equation}
 where $\alpha = \min \left\lbrace \frac{\sigma}{L(1+\rho)}, 1 \right\rbrace$. We show in Theorem~\ref{thm:exp}, that this schedule leads to a convergence rate for the average squared gradient norm of
\[
    \mathcal{O}\left(\frac{1}{T} + \frac{\sigma \log T}{\sqrt{T}}\right),
\]
where $\mathcal{O}(\cdot)$ hides polynomial dependence on the problem parameters $L, \sigma$ and $\rho$. Thus recovering the standard performance guarantees of SGD in the smooth non-convex setting, up to a logarithmic factor\footnote{The logarithmic term is due to the time-varying learning rates and one would suffer a similar term in SGD with time-varying learning rates.} in $T$ in the slow rate term.
Moreover, in regimes where the noise level is small relative to the gradients norm, the procedure achieves the better rate of $\mathcal{O}(1/T)$, comparable to the fast rates observed in noiseless or low-variance scenarios. The core idea of the analysis lies in reinterpreting SDA updates as an instance of SGD over a sequence of functions $(f_t)_{t\le T}$ defined below.

\paragraph{Main Challenges.}  

Traditional analysis of dual averaging in the convex setting goes through without pain, for the existence of linear surrogates of the loss $\tilde{\ell}_{t}(\bx) := \langle \sum_{i=1}^{t} \bg_i, \bx\rangle + \psi_{t}(\bx) / \eta_{t}$, for some sequence of regularizers $\psi_{t}(\cdot)$. Bregman telescoping yields $\mathcal{O}(\sqrt{T})$ regret and hence $\mathcal{O}(1 / \sqrt{T})$ optimization error in expectation after averaging \citep{Xiao10}. In the non-convex smooth case, however, we immediately lose this property. In general, especially with decreasing step sizes, every new iterate $\bx_{t+1}$ depends on the entire history of past stochastic gradients with non-vanishing weights. So viewing stochastic dual averaging (SDA) as an explicit primal descent step \eqref{eq:DA-def}, we run into unavoidable coupling terms like $\E\left[\langle \sum_{i=1}^{t-1} \bg_i, \nabla f(\bx_{t})\rangle\right]$, which classical analysis techniques cannot handle since a proper \emph{descent lemma} is missing. Recent literature confirms that new techniques are needed. \citet{liu2023high} analyzed non-convex SDA by replacing the convergence guarantee with the surrogate $\norm{\bx_{t+1} - \bx_{t}}^{2}$ and obtain only an $\mathcal{O}(1 / t)$ decay. \citet{cutkosky2019anytime} proposed an online-to-batch framework that bypasses the coupling by inserting an auxiliary projection, but at the price of departing from original SDA update that we care about.

\paragraph{Dual Averaging as SGD on Regularized Functions.}

We use an equivalent formulation of the SDA, presented in \cite{JelassiD20}, showing that it is equivalent to running SGD on a modified sequence of functions.
\begin{proposition}[\citep{JelassiD20}]\label{prop:DA-reformulation}
Let $\bx_{1} = \bzero$ and $\{f_{t}\}_{t=0}^{T}$ be a sequence of functions such that $f_{t} \colon \R^{d} \rightarrow \R$ and 
\[
    f_{t}(\bx) := f(\bx) + \frac{\gamma_{t}}{2} \norm{\bx}^{2}_{2},
\]
where $\gamma_{t} := \frac{1}{\eta_{t}} - \frac{1}{\eta_{t-1}}, \quad \forall t \geq 1$.
Then, the SDA update \eqref{eq:DA-def} is equivalent to SGD with learning rate $\eta_{t}$ on the function $f_{t}(\bx)$ for all $t \geq 1$.
\end{proposition}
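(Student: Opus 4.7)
The plan is to verify by induction on $t$ that the SGD update on $f_t$ with stepsize $\eta_t$, starting from the iterate $\bx_t$ produced by SDA, yields exactly $\bx_{t+1} = -\eta_t \sum_{i=1}^t \bg_i$. The stochastic gradient of $f_t$ at $\bx_t$ is $\hn(\bx_t) + \gamma_t \bx_t = \bg_t + \gamma_t \bx_t$, since the quadratic regularizer contributes the deterministic term $\gamma_t \bx_t$. So the proposed SGD step is
\[
\bx_{t+1} = \bx_t - \eta_t (\bg_t + \gamma_t \bx_t) = (1 - \eta_t \gamma_t)\,\bx_t - \eta_t \bg_t.
\]
The key algebraic identity is $1 - \eta_t \gamma_t = 1 - \eta_t(1/\eta_t - 1/\eta_{t-1}) = \eta_t/\eta_{t-1}$, which rewrites the step as $\bx_{t+1} = (\eta_t/\eta_{t-1})\,\bx_t - \eta_t \bg_t$.

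With this rewrite, the induction is straightforward. For the base case $t=1$, I would adopt the convention $1/\eta_0 := 0$ (equivalently, treat the first step separately), so that $\bx_1 = \bzero$ and the formula collapses to $\bx_2 = -\eta_1 \bg_1$, matching \eqref{eq:DA-def}. For the inductive step, assuming $\bx_t = -\eta_{t-1} \sum_{i=1}^{t-1} \bg_i$, I substitute into the identity above:
\[
\bx_{t+1} = \frac{\eta_t}{\eta_{t-1}}\Bigl(-\eta_{t-1} \sum_{i=1}^{t-1} \bg_i\Bigr) - \eta_t \bg_t = -\eta_t \sum_{i=1}^{t} \bg_i,
\]
which is precisely the SDA update at step $t+1$. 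This closes the induction.

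There is really no hard obstacle here; the statement is essentially an algebraic identity dressed as a proposition. The only small subtlety is handling the boundary term $\gamma_1$ cleanly, which I would absorb by the convention $1/\eta_0 = 0$ (so that $\gamma_1 = 1/\eta_1$ and the quadratic regularizer is harmless at $\bx_1 = \bzero$). The point worth emphasizing in the writeup is the interpretation: the time-varying regularizer $\tfrac{\gamma_t}{2}\|\bx\|^2$ with $\gamma_t = 1/\eta_t - 1/\eta_{t-1}$ is exactly what compensates the shrinking stepsize so that old gradients retain full weight in the running sum, which is the defining feature of DA.
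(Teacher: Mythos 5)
Your proof is correct and takes essentially the same route as the paper: substitute $\bx_t = -\eta_{t-1}\sum_{i<t}\bg_i$ into the SDA update and rearrange to exhibit the SGD step on $f_t$, with the convention $1/\eta_0 = 0$ handling $t=1$. Framing it as an induction is just a tidier presentation of the paper's direct two-line algebra, and you in fact get the sign right where the paper's inline derivation writes $\bigl(\tfrac{1}{\eta_{t-1}}-\tfrac{1}{\eta_t}\bigr)\bx_t$ instead of the correct $\gamma_t\bx_t = \bigl(\tfrac{1}{\eta_t}-\tfrac{1}{\eta_{t-1}}\bigr)\bx_t$.
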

This result is an immediate consequence of the update of SDA:
\[
\bx_{t+1} = -\eta_t \left(\bg_t + \sum_{i=1}^{t-1} \bg_i\right) = \bx_t -\eta_t \left(\bg_t+\left(\frac{1}{\eta_{t-1}}-\frac{1}{\eta_t}\right)\bx_t\right)~.
\]

\begin{remark}
Despite the ``equivalence'' result, we want to stress that an important difference between SDA and SGD. In fact, stochastic dual averaging (SDA) and stochastic gradient descent (SGD)
are built on different weighting schemes. SDA aggregates all past gradients with a \emph{uniform} weight \(1/t\), whereas SGD assigns the most recent gradient the diminishing factor
\(\eta_t\bigl/\sum_{i=1}^{t}\eta_i\). This uniform averaging changes the iterate trajectory, so SDA must be treated as an algorithm in its own right, rather than as a constrained-only variant of SGD. From a technical point of view, the standard smooth-SGD convergence analysis is insufficient in our setting: the equivalence transforms SGD into a \emph{sequence} of time-varying objectives $\{f_t\}_{t\ge 1}$, instead of a fixed static one. Moreover, controlling the behaviour of $\{\nabla f_t\}$ does not automatically yield control over $\nabla f$.
\end{remark}

Given that $f$ is smooth, we have $f_{t}(\cdot)$ is $L_{t}$-smooth with $ L_{t} := L + \gamma_{t}$. Therefore, leveraging Proposition~\ref{prop:DA-reformulation} and the \(L_t\)-smoothness of every surrogate \(f_t\), we obtain  
\begin{equation}\label{eq:DA-central-bound}
	\sum_{t=1}^{T}\eta_t \left(1-\frac{\eta_t L_t}{2}\right)  \mathbb{E}\left[\norm{\nabla f_t(\bx_{t})}^{2} \right]  + \sum_{t=1}^{T} \mathbb{E}\left[f_{t-1}(\bx_{t}) - f_t(\bx_{t}) \right] \\
	\leq  \Delta+ \sum_{t=1}^{T} \frac{\eta_{t}^{2} L_{t}}{2}~\E\left[\norm{\bxi_t}^2\right]
\end{equation}
where $\Delta := f(\bzero) - f^*$. Equation~\eqref{eq:DA-central-bound} generalizes the classical SGD descent inequality from the stationary case \(f_t\equiv f\) to an online setting in
which the learner faces a time-varying sequence of smooth losses; so far, the derivation remains agnostic to how these losses are generated by the SDA updates.  In our context, however, the surrogates possess the special property
\begin{equation*}
	f_{t}(\bx_{t+1}) - f_t(\bx_t) = f_t(\bx_{t+1})-f_{t-1}(\bx_t) + \frac{\gamma_{t-1}-\gamma_t}{2} \norm{\bx_t}^2,
\end{equation*}
which shows that up to a telescopic term, the term $\sum_{t=1}^{T} \mathbb{E}\left[f_{t-1}(\bx_{t}) - f_t(\bx_{t}) \right]$ in \eqref{eq:DA-central-bound} carries a positive component $\frac{\gamma_{t-1}-\gamma_t}{2} \mathbb{E}[\norm{\bx_t}^2 ]$ allowing us to relate the obtained bound on the gradients of $f_t$ to one on the gradients of objective function $f$. To achieve this, a particular selection of learning rates, detailed in the lemma below, is required.

\begin{lemma}\label{lem:tech_main}
    Consider the step sizes
    \[
        \eta_t = \frac{1}{L(1+\rho) (1+\rho + \alpha \sqrt{t})}~,
    \]
    where $\alpha = \min \left\lbrace \frac{\sigma}{L(1+\rho)}, 1 \right\rbrace$. We have for any $t \ge 2$, $\frac{\eta_t L_t}{2} \le 1$ and 
    \begin{align*}
           \frac{1}{2} \eta_t -\frac{1}{4} \eta_t^2 L_t - \frac{1}{2}\rho \eta_t^2L_t &\ge \frac{\eta_t}{8}\\
           \gamma_{t-1}-\gamma_t -\gamma_t^2\eta_t &\ge 0 ~.
    \end{align*}
\end{lemma}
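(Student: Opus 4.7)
The entire lemma is pure algebra built on the closed form of $\eta_t$. Using $\eta_t^{-1} = L(1+\rho)(1+\rho+\alpha\sqrt{t})$, the telescope $\gamma_t = \eta_t^{-1} - \eta_{t-1}^{-1}$ (with the natural convention $\eta_0 := 1/[L(1+\rho)^2]$) together with $\sqrt{t} - \sqrt{t-1} = 1/(\sqrt{t}+\sqrt{t-1})$ yields
\[
\gamma_t = \frac{L(1+\rho)\alpha}{\sqrt{t}+\sqrt{t-1}}, \qquad L_t = L + \gamma_t, \qquad \eta_t\gamma_t = 1 - \eta_t/\eta_{t-1},
\]
and hence the master identity
\[
\eta_t L_t \;=\; \frac{1 + (1+\rho)\alpha/(\sqrt{t}+\sqrt{t-1})}{(1+\rho)(1+\rho+\alpha\sqrt{t})},
\]
from which every inequality in the lemma will be derived.

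\textbf{The two claims on $\eta_t L_t$.} Dividing the first displayed inequality by $\eta_t$ and rearranging shows it is equivalent to $\eta_t L_t \le 3/[2(1+2\rho)]$. Since this right-hand side never exceeds $3/2 \le 2$ for any $\rho\ge 0$, the same bound delivers $\eta_t L_t/2 \le 1$, so the $\frac{\eta_t L_t}{2}\le 1$ claim collapses into that same single estimate. Clearing denominators in the master identity rewrites the estimate as
\[
1 + 2\rho + 3\rho^2 \;+\; \frac{(1+\rho)\alpha}{\sqrt{t}+\sqrt{t-1}}\bigl[3t + 3\sqrt{t(t-1)} - 2(1+2\rho)\bigr] \;\ge\; 0.
\]
The first summand is trivially nonneg. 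The bracketed factor in the second summand is nonneg whenever $2(1+2\rho)\le 3t+3\sqrt{t(t-1)}$, which already covers small and moderate $\rho$. In the complementary regime I would bound the magnitude of that now-negative contribution using $\sqrt{t}+\sqrt{t-1}\ge 1+\sqrt{2}$ (valid for $t\ge 2$) and $\alpha\le 1$, obtaining at most $2(\sqrt{2}-1)(1+\rho)(1+2\rho)$. Subtracting yields the quadratic $(7-4\sqrt{2})\rho^2 + (8-6\sqrt{2})\rho + (3-2\sqrt{2})$, whose leading coefficient is positive ($\approx 1.34$) and whose discriminant $(8-6\sqrt{2})^2 - 4(7-4\sqrt{2})(3-2\sqrt{2}) \approx -0.69$ is negative; hence it is nonneg for all $\rho\ge 0$, closing both claims.

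\textbf{Claim on $\gamma_{t-1}-\gamma_t-\gamma_t^2\eta_t$.} Using the formula for $\gamma_t$ and the identity $\sqrt{t}-\sqrt{t-2} = 2/(\sqrt{t}+\sqrt{t-2})$, for $t\ge 3$ I get
\[
\gamma_{t-1} - \gamma_t \;=\; \frac{2L(1+\rho)\alpha}{(\sqrt{t}+\sqrt{t-2})(\sqrt{t-1}+\sqrt{t-2})(\sqrt{t}+\sqrt{t-1})} \;\ge\; 0,
\]
nonnegativity coming from concavity of $\sqrt{\cdot}$. Dividing $\gamma_t^2\eta_t$ by this expression and applying the crude bounds $\sqrt{t-1}+\sqrt{t-2}\le 2\sqrt{t-1}$, $\sqrt{t}+\sqrt{t-2}\le 2\sqrt{t}$, $\sqrt{t}+\sqrt{t-1}\ge 2\sqrt{t-1}$, and $1+\rho+\alpha\sqrt{t}\ge \alpha\sqrt{t}$ collapses the ratio to at most $1$. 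For the base case $t=2$, $\gamma_1-\gamma_2 = L(1+\rho)\alpha(2-\sqrt{2})$ and the inequality reduces to $(2-\sqrt{2})(1+\rho+\alpha\sqrt{2})\ge \alpha(1+\rho)(3-2\sqrt{2})$, which holds because $\alpha\le 1 < 2+\sqrt{2} = (2-\sqrt{2})/(3-2\sqrt{2})$.

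\textbf{Main obstacle.} The delicate piece is the intermediate-$\rho$ window in the first two claims. The naive bound $\eta_tL_t \le 1/(1+\rho)^2 + \alpha/[2(1+\rho)]$ that one would get by plugging $\sqrt{t}-\sqrt{t-1}\le 1/(2\sqrt{t-1})$ with $\sqrt{t-1}\ge 1$ actually \emph{fails} to reach the target $3/[2(1+2\rho)]$ for $\rho\in(0,1)$ (the two curves cross). The proof must therefore retain the sharper geometric factor $1/(\sqrt{t}+\sqrt{t-1})\le \sqrt{2}-1$ that only becomes available from $t\ge 2$ onwards, and let the discriminant argument absorb the residual slack. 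No such sensitivity arises in the last claim, which is essentially concavity of the square root plus loose bounds.
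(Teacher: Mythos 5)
Your proof is correct and is a genuinely different algebraic route from the paper's. For the first two claims, you reduce everything to the single inequality $\eta_t L_t \le \tfrac{3}{2(1+2\rho)}$, clear denominators in the exact closed form $\eta_t L_t = \bigl(1 + (1+\rho)\alpha/(\sqrt{t}+\sqrt{t-1})\bigr)/\bigl((1+\rho)(1+\rho+\alpha\sqrt{t})\bigr)$, and finish with a sign-of-discriminant argument on the quadratic $(7-4\sqrt{2})\rho^2 + (8-6\sqrt{2})\rho + (3-2\sqrt{2})$. The paper (Lemma~\ref{lem:tech}) instead bounds $\gamma_t \le L(1+\rho)\alpha/(2\sqrt{t-1})$, deduces $\eta_t L_t \le \tfrac{1}{(1+\rho)^2} + \tfrac{1}{\sqrt{t}(1+\rho)+t}$, specializes to $t = 2$, and then proves the resulting rational function $h(\rho) = \tfrac{1+2\rho}{2(1+\rho)^2} + \tfrac{1+2\rho}{2\sqrt{2}(1+\rho)+4}$ is at most $3/4$. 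Your polynomial-plus-discriminant route is more mechanical and self-contained; the paper's route is perhaps shorter to state but leaves the one-variable maximization as an assertion. For the third claim both proofs use the same identity $\gamma_{t-1}-\gamma_t = \frac{2L(1+\rho)\alpha}{(\sqrt{t}+\sqrt{t-2})(\sqrt{t-1}+\sqrt{t-2})(\sqrt{t}+\sqrt{t-1})}$ and compare it to $\gamma_t^2\eta_t$; you bound the ratio directly, the paper bounds each factor separately.

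Two remarks. First, your ``complementary regime'' framing is unnecessary: the lower bound $3t+3\sqrt{t(t-1)}-2(1+2\rho) \ge -2(1+2\rho)$ holds unconditionally, so one can apply the discriminant argument once, with no case split. Second, and more importantly, you adopt the convention $\eta_0 = 1/[L(1+\rho)^2]$ (i.e.\ extending the closed form to $t=0$), which makes $\gamma_1 = L(1+\rho)\alpha > 0$ and lets the $t=2$ instance of $\gamma_{t-1}-\gamma_t - \gamma_t^2\eta_t \ge 0$ go through. The paper instead sets $\eta_0 = \eta_1$ (so $\gamma_1 = 0$), under which that inequality is \emph{false} at $t=2$, and indeed the paper's appendix proof opens with ``Fix $t \ge 3$'' even though both lemma statements claim $t\ge 2$. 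So your convention is not merely cosmetic: it is the one that actually makes the stated lemma correct at the boundary, and since $\bx_1 = \bzero$ the value of $\gamma_1$ does not alter the algorithm's iterates, only the surrogate $f_1$ and the constant $L_1$. (There is a harmless slip in your $t=2$ base case: the reduced inequality should read $(2-\sqrt{2})(1+\rho+\alpha\sqrt{2}) \ge \alpha(3-2\sqrt{2})$, without the extra $(1+\rho)$ on the right; the conclusion is unaffected since $2-\sqrt{2} > 3-2\sqrt{2}$ and $\alpha \le 1$.)
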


\begin{theorem}\label{thm:exp}
    Suppose the $(\rho, \sigma)$-strong-growth condition holds, and that $f$ is $L$-smooth, then SDA iterates with step sizes
    \[
    \eta_t = \frac{1}{L(1+\rho) (1+\rho + \alpha \sqrt{t})},
    \] 
    where  $\alpha = \min \left\lbrace \frac{\sigma}{L(1+\rho)}, 1 \right\rbrace$, satisfy
    \begin{align*}
        &\frac{1}{T}\sum_{t=1}^T \mathbb{E}\left[ \norm{\nabla f(\bx_t)}^2\right] \\
		&\le \left(10 \Delta + 4 \left(\frac{\sigma^2}{L(1+\rho)}+ L(1+\rho)\right)\log(1+\alpha \sqrt{T}) \right) \frac{L(1+\rho)^2 + \sigma \sqrt{T}}{T}.
    \end{align*}
    where $\Delta = f(\mathbf{0})-f^*$.
\end{theorem}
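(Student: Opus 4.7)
The plan is to chain three ingredients around the descent inequality~\eqref{eq:DA-central-bound}: the strong-growth bound on the noise, a Young's conversion between $\|\nabla f_t(\bx_t)\|^2$ and $\|\nabla f(\bx_t)\|^2$, and the step-size tuning packaged in Lemma~\ref{lem:tech_main}. I would start by applying Assumption~\ref{ass:strong-growth} to replace $\E[\|\bxi_t\|^2]$ on the right of~\eqref{eq:DA-central-bound} by $\rho\,\E[\|\nabla f(\bx_t)\|^2] + \sigma^2$, then use the identity $\nabla f_t(\bx_t) = \nabla f(\bx_t) + \gamma_t\bx_t$ together with Young's in the symmetric form $\|\nabla f_t(\bx_t)\|^2 \ge \tfrac{1}{2}\|\nabla f(\bx_t)\|^2 - \gamma_t^2\|\bx_t\|^2$ to turn the $\|\nabla f_t(\bx_t)\|^2$ factor on the LHS into an $\|\nabla f(\bx_t)\|^2$ factor, at the price of a $\gamma_t^2\|\bx_t\|^2$ penalty.

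The positive telescopic term $\tfrac{\gamma_{t-1}-\gamma_t}{2}\|\bx_t\|^2$ already on the LHS of~\eqref{eq:DA-central-bound} is precisely the resource that absorbs this Young's penalty: the third bullet of Lemma~\ref{lem:tech_main}, $\gamma_{t-1}-\gamma_t \ge \gamma_t^2\eta_t$, is the comparison needed for the residual $\|\bx_t\|^2$ coefficient to be nonnegative (the $\tfrac12\eta_t^2 L_t\gamma_t^2$ correction coming from the $1-\eta_t L_t/2$ factor is of lower order), while the second bullet certifies that the surviving coefficient on $\E[\|\nabla f(\bx_t)\|^2]$, namely $\tfrac12\eta_t - \tfrac14\eta_t^2 L_t - \tfrac12\rho\eta_t^2 L_t$, is at least $\eta_t/8$ after accounting for the SGC penalty $\tfrac12\rho\eta_t^2 L_t$ on the RHS. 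Dropping the nonnegative $\|\bx_t\|^2$ contribution leaves
\[
\sum_{t=1}^{T} \frac{\eta_t}{8}\,\E\!\left[\|\nabla f(\bx_t)\|^2\right] \;\le\; \Delta + \frac{\sigma^2}{2}\sum_{t=1}^{T}\eta_t^2 L_t.
\]

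To extract the advertised rate I would then lower-bound $\eta_t \ge \eta_T$ and note $1/\eta_T = L(1+\rho)^2 + L(1+\rho)\alpha\sqrt{T} \le L(1+\rho)^2 + \sigma\sqrt{T}$ because $L(1+\rho)\alpha = \min\{\sigma, L(1+\rho)\} \le \sigma$. In parallel, splitting $L_t = L + \gamma_t$ and using $\eta_t L_t \le 2$ from the first bullet of the lemma, the noise term $\sum_t \eta_t^2 L_t$ behaves like $\int_1^T (1+\rho+\alpha\sqrt{t})^{-2}\,dt = \mathcal{O}(\log(1+\alpha\sqrt{T}))$, with explicit constants that, after a case split on whether $\alpha = 1$ or $\alpha = \sigma/(L(1+\rho))$, reproduce the $\sigma^2/(L(1+\rho)) + L(1+\rho)$ prefactor. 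Dividing by $T$ and collecting constants delivers the announced bound.

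The main obstacle is the Young's-absorption step. The penalty $\gamma_t^2\|\bx_t\|^2$ arising when passing from $\|\nabla f_t\|^2$ to $\|\nabla f\|^2$ must be dominated term-by-term by the telescopic surplus $\tfrac{\gamma_{t-1}-\gamma_t}{2}\|\bx_t\|^2$, while the coefficient on $\|\nabla f(\bx_t)\|^2$ must simultaneously survive the SGC erosion $\rho\eta_t^2 L_t/2$. Both constraints are tight, which is why the particular schedule $\eta_t = 1/[L(1+\rho)(1+\rho+\alpha\sqrt{t})]$ is essentially forced within the $1/\sqrt{t}$ family: a looser tuning violates either the second or the third inequality of Lemma~\ref{lem:tech_main} and the reduction collapses.
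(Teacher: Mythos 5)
Your chain of reasoning --- SGC on the noise, Young's inequality to convert $\|\nabla f_t(\bx_t)\|^2$ into $\|\nabla f(\bx_t)\|^2$ minus a $\gamma_t^2\|\bx_t\|^2$ penalty, absorption of that penalty by the telescoping surplus $\tfrac{\gamma_{t-1}-\gamma_t}{2}\|\bx_t\|^2$, invoking Lemma~\ref{lem:tech_main} for the $\eta_t/8$ coefficient, then lower-bounding $\eta_t \ge \eta_T$ and estimating $\sum_t \eta_t^2 L_t$ by a logarithm --- is exactly the route the paper takes. The one detail you glossed over is that Lemma~\ref{lem:tech_main} applies only for $t\ge 2$, so the paper peels off the $t=1$ term and controls $\|\nabla f(\mathbf{0})\|^2$ by $2L\Delta$ via the descent lemma, which is where the $10\Delta$ prefactor (rather than $8\Delta$) comes from.
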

\begin{proof}[Proof sketch]
	A detailed proof is presented in Section~\ref{sec:proof_sg} of the Appendix. We consider the equivalence argument introduced in Proposition~\ref{prop:DA-reformulation}.
	Because $f_t$ is $L_t:=L+\gamma_t$-smooth, the standard descent inequality gives
	\[
	f_t(\bx_{t+1})\le
	f_t(\bx_t)-\eta_t\!\left(1-\tfrac{\eta_tL_t}{2}\right)\!
	\bigl\langle\nabla f_t(\bx_t),\widehat{\nabla}f_t(\bx_t)\bigr\rangle
	+\tfrac{\eta_t^2L_t}{2}\bigl\|\widehat{\nabla}f_t(\bx_t)\bigr\|^2~.
	\]
	Adding and subtracting $f_{t-1}(\bx_t)$ inside the left-hand side makes the term
	$f_t(\bx_{t+1})-f_{t-1}(\bx_t)$ telescope once we sum over~$t$.  After taking expectations and using the noise bound
	$\E_{t-1}[\|\bxi_t\|^2]\le\rho\|\nabla f(\bx_t)\|^2+\sigma^2$, we arrive at
	\[
	\sum_{t=1}^T\E[A_t] \le f(\mathbf 0)-f^*+\tfrac{\sigma^2}{2}\sum_{t=1}^T\eta_t^2L_t,
	\]
	where
	\[
	A_t =\eta_t\!\Bigl(1-\tfrac{\eta_tL_t}{2}\Bigr)\!\|\nabla f_t(\bx_t)\|^2 +\tfrac{\gamma_{t-1}-\gamma_t}{2}\|\bx_t\|^2 -\tfrac{\rho}{2}\eta_t^2L_t\|\nabla f(\bx_t)\|^2~.
	\]
	Choosing the stepsizes
	$\eta_t = \frac{1}{L(1+\rho) (1+\rho + \alpha \sqrt{t})}$, 
	Lemma \ref{lem:tech_main} ensures that $A_t\ge\tfrac{\eta_t}{8}\|\nabla
	f(\bx_t)\|^2$.  Substituting this lower bound and evaluating the resulting sums
	yields
	\[
	\sum_{t=1}^T\E\bigl[\|\nabla f(\bx_t)\|^2\bigr] \le \frac{10}{\eta_T} \Delta + \frac{4 \sigma^2 \log(1+\alpha \sqrt{T})}{\eta_T L(1+\rho)\alpha^2} , \quad \Delta:=f(\mathbf 0)-f^*,
	\]
	which coincides with the statement of Theorem \ref{thm:exp}.
\end{proof}
Our bound scales as $\mathcal{O}\left(1/T + \sigma \log(T)/\sqrt{T}\right)$. When the noise level satisfies \(\sigma>0\), this matches the minimax-optimal rate for stochastic gradient descent established by \citet{GhadimiL12}, up to a \(\log T\) factor.  Note that the learning rates we used \emph{does not} require advance knowledge of the time horizon \(T\), whereas the optimal stepsizes in \citet{GhadimiL12} are tuned using this information.  In the low noise regime (\(\sigma=0\))—which, under the strong growth condition,
corresponds to gradient noise being on the same order as the true gradient—our analysis shows that the accelerated \(1/T\) rate is attainable,
mirroring the best known guarantees for standard SGD. 

\begin{remark}[Comparison with \citet{JelassiD20}]
	Notably, the guarantee in \citet{JelassiD20} (Theorem 3.1) does not apply for the standard Dual Averaging (DA) iteration, but for a variant so called {Modernized Dual Averaging} (MDA) algorithm, and also with a very restricted setting where the scaling parameters $(\lambda_t,\beta_t)$ are required to grow like $\sqrt{t+1}$. Although the MDA algorithm encompasses DA as a special case $\beta_t\equiv 1$ and $\lambda_t=\eta_t$, the specific choice of parameters in Theorem 3.1 (the only one for which they guarantee convergence) does not, as the gradients do not enter into the iterate with equal weights. 
	
	Even further, their guarantee is stated under the \emph{a-priori} assumption that the iterates remain in a fixed Euclidean ball, i.e.\ $\sup_{t\ge 1}\|\bx_t\|\le R$ for some known radius $R$.  Our result removes this technical condition: the bound in Theorem~\ref{thm:exp} holds \emph{without} any explicit control on the iterates and is therefore \emph{iterate-independent} and, as a result, proves that SDA converges at the optimal rates. Furthermore, even in settings where the iterates are bounded, the result of \citet{JelassiD20} fails to recover the fast $\mathcal{O}(1/T)$ rate attainable in low-noise regimes, whereas our analysis does.
\end{remark}

\section{High probability convergence of SDA}
\label{sec:high-prob}
The bound we have just established controls \emph{the average} performance over a hypothetical ensemble of infinitely many independent runs. Such guarantees in-expectation are of theoretical value, but they do not tell us what happens in typical machine learning applications, where practitioners only perform a few number of runs rather than a whole ensemble. Therefore, to mover closer to this practical setting, a stronger result is desired, to ensure that each run will, with probability at least $1 - \delta$, stay within the required error tolerance. To achieve a high-probability bound amounts to controlling the \emph{entire tail} of the error distribution, which apparently requires stronger assumptions on the stochastic noise distribution. Empirical evidences \citep{zhang2020adaptive,simsekli2019tail} has shown that the noise distribution for standard vision tasks can be well-approximated by a sub-Gaussian distribution. In this section, we adopt the strong-growth condition \ref{ass:strong-growth} with $\rho = 0$ and further add \ref{ass:subgaussian-noise} to our assumption set, and show in Theorem~\ref{thm:da-convergence} a high probability convergence guarantee of SDA.

We define for $t \geq 1$
\begin{align*}
    \hn_t(\bx) := \hn(\bx)+ \gamma_t \bx; \quad \Delta_{\tau}f_{t} := f_{t}(\bx_{\tau}) - f^{*}; \quad \bxi_{t} := \hn(\bx_{t}) - \nabla f(\bx_{t}).
\end{align*}
We let $\F_{t} := \sigma\left(\xi_{1}, \dots, \xi_{t-1}\right)$ denote the natural filtration. Note that $\bx_{t}$ is $\F_{t}$-measurable. The following lemma serves as a fundamental step of our analysis.
\begin{lemma} \label{lem:da-basic-inequality}
For $t \geq 1$, we have
\begin{align}
    C_{t} &:= \eta_{t} \left(1 - \frac{\eta_{t} L_{t}}{2}\right) \norm{\nabla f_{t}(\bx_{t})}^{2} + \Delta_{t+1}f_{t} - \Delta_{t}f_{t}\nonumber \\
    & \leq \eta_{t} \left(\eta_{t} L_{t} - 1\right) \inn{\nabla f_{t}(\bx_{t})}{\bxi_{t}} + \frac{\eta_{t}^{2} L_{t}}{2}\norm{\bxi_{t}}^{2}. \label{eq:da-basic-inequality}
\end{align}
\end{lemma}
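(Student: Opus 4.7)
}

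The plan is to apply the standard descent inequality to the surrogate $f_t$ along the one-step SGD update given by Proposition~\ref{prop:DA-reformulation}, and then identify the resulting terms with the quantities appearing in the statement. First I would recall from Proposition~\ref{prop:DA-reformulation} that the SDA update can be written as $\bx_{t+1} = \bx_t - \eta_t \hn_t(\bx_t)$, where $\hn_t(\bx_t) = \bg_t + \gamma_t \bx_t$. Since $\nabla f_t(\bx_t) = \nabla f(\bx_t) + \gamma_t \bx_t$ and $\bxi_t = \bg_t - \nabla f(\bx_t)$, the ``effective'' stochastic gradient decomposes cleanly as $\hn_t(\bx_t) = \nabla f_t(\bx_t) + \bxi_t$, i.e.\ the noise seen by the SGD interpretation coincides with $\bxi_t$ (the deterministic shift $\gamma_t \bx_t$ is $\F_t$-measurable and contributes no extra stochasticity).

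Next I would invoke $L_t$-smoothness of $f_t$ at the pair $(\bx_t, \bx_{t+1})$, yielding
\[
f_t(\bx_{t+1}) \le f_t(\bx_t) + \langle \nabla f_t(\bx_t),\, \bx_{t+1}-\bx_t\rangle + \tfrac{L_t}{2}\|\bx_{t+1}-\bx_t\|^2,
\]
and substitute $\bx_{t+1}-\bx_t = -\eta_t(\nabla f_t(\bx_t)+\bxi_t)$. Expanding the inner product and the squared norm produces three kinds of terms: $\|\nabla f_t(\bx_t)\|^2$, the cross term $\langle \nabla f_t(\bx_t), \bxi_t\rangle$, and $\|\bxi_t\|^2$. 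Collecting coefficients gives
\[
f_t(\bx_{t+1}) - f_t(\bx_t) \le -\eta_t\Bigl(1-\tfrac{\eta_t L_t}{2}\Bigr)\|\nabla f_t(\bx_t)\|^2 + \eta_t(\eta_t L_t -1)\langle \nabla f_t(\bx_t),\bxi_t\rangle + \tfrac{\eta_t^2 L_t}{2}\|\bxi_t\|^2.
\]

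Finally, I would rearrange and use $f_t(\bx_{t+1}) - f_t(\bx_t) = \Delta_{t+1}f_t - \Delta_t f_t$, moving the $\|\nabla f_t(\bx_t)\|^2$-term to the left-hand side; this recovers exactly the definition of $C_t$ on the left and the claimed upper bound on the right. No obstacle arises: the whole argument is a direct one-line expansion once the correct identification of the SGD update and its noise is made. The only point requiring a bit of care is keeping track of the sign of $\eta_t L_t - 1$ (which is why the cross term is left as is, rather than bounded in absolute value), since in the later use of this lemma the sign of this coefficient matters for turning $\langle \nabla f_t(\bx_t),\bxi_t\rangle$ into a martingale-difference sum suitable for sub-Gaussian concentration.
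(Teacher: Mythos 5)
Your proof is correct and takes essentially the same approach as the paper: invoke $L_t$-smoothness of $f_t$, substitute the SGD-form update $\bx_{t+1}-\bx_t=-\eta_t(\nabla f_t(\bx_t)+\bxi_t)$, expand, and rearrange. The observation that the noise seen by the surrogate coincides with $\bxi_t$ (since $\gamma_t\bx_t$ is deterministic given $\F_t$) is exactly the identification the paper uses implicitly.
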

We can concentrate the LHS of \eqref{eq:da-basic-inequality} using a similar argument from \cite{liu2023high}. For $w_{t} \geq 0$, let
\[
    Z_{t} := w_{t} C_{t} - v_{t} \norm{\nabla f_{t}(\bx_{t})}^{2}; \quad S_{t} := \sum_{i=t}^{T} Z_{i}, \quad \forall \, 1 \leq t \leq T
\]
where $v_{t} = 3 \sigma^{2} w_{t}^{2} \eta_{t}^{2} (\eta_{t} L_{t} - 1)^{2}$. Observe that the proof of Theorem 4.3 in the paper applies to any sequence $\{\ba_t\}$ that only includes noises up to $\bx_{t}$, rather than the specific choice of $\{\nabla f(\bx_{t})\}$. Using the same technique, we can prove the following key inequality.
\begin{theorem} \label{thm:da-moment-inequality} 
Suppose for all $1 \leq t \leq T$, $w_{t}$ and $\eta_{t}$ satisfy
\begin{equation} \label{eq:da-scale-condition}
    0 \leq w_{t} \eta_{t}^{2} L_{t} \leq \dfrac{1}{2 \sigma^{2}},
\end{equation}
then
\begin{equation}
    \E \left[\exp\left(S_{t}\right) \mid \F_{t}\right] \leq \exp\left(3 \sigma^{2} \sum_{i=t}^{T} \frac{w_{i} \eta_{i}^{2} L_{i}}{2}\right). \label{eq:da-key-inequality}
\end{equation}
\end{theorem}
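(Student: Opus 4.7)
\textbf{Proof plan for Theorem~\ref{thm:da-moment-inequality}.}
I would proceed by backward induction on $t$, exploiting the decomposition $S_t = Z_t + S_{t+1}$ together with the fact that $Z_t$ is $\F_{t+1}$-measurable while $\nabla f_t(\bx_t)$ is $\F_t$-measurable. The base case $t=T+1$ is trivial, with $S_{T+1}$ interpreted as an empty sum. For the inductive step, the tower property yields
\[
\E[\exp(S_t) \mid \F_t] = \E\bigl[\exp(Z_t)\,\E[\exp(S_{t+1}) \mid \F_{t+1}] \,\bigm|\, \F_t\bigr],
\]
so applying the inductive hypothesis inside the outer conditional expectation reduces the whole theorem to establishing the per-step bound $\E[\exp(Z_t) \mid \F_t] \leq \exp\bigl(\tfrac{3\sigma^2 w_t \eta_t^2 L_t}{2}\bigr)$.

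To derive this per-step estimate, I would substitute Lemma~\ref{lem:da-basic-inequality} into the definition of $Z_t$ to obtain
\[
Z_t \leq w_t \eta_t(\eta_t L_t - 1)\inn{\nabla f_t(\bx_t)}{\bxi_t} + \tfrac{w_t \eta_t^2 L_t}{2}\|\bxi_t\|^2 - v_t\|\nabla f_t(\bx_t)\|^2.
\]
Because $\nabla f_t(\bx_t)$ is $\F_t$-measurable, the only conditionally random objects are the cross term and $\|\bxi_t\|^2$. I would then apply Young's inequality to the cross term with weights chosen so that the resulting $\|\nabla f_t(\bx_t)\|^2$ contribution is exactly absorbed by $-v_t\|\nabla f_t(\bx_t)\|^2$; the coefficient $v_t = 3\sigma^2 w_t^2 \eta_t^2(\eta_t L_t-1)^2$ is engineered precisely to make this cancellation occur while keeping the residual coefficient on $\|\bxi_t\|^2$ comfortably below $1/\sigma^2$. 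After this absorption the upper bound on $Z_t$ reduces to a scalar multiple of $\|\bxi_t\|^2$.

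The last step is to invoke the sub-Gaussian moment inequality of Assumption~\ref{ass:subgaussian-noise}: the standing hypothesis $w_t\eta_t^2 L_t \leq 1/(2\sigma^2)$, together with the residual from Young's step, guarantees that the effective coefficient on $\|\bxi_t\|^2$ lies in the admissible range $[0,1/\sigma^2]$, so the MGF bound $\E[\exp(c\|\bxi_t\|^2)\mid\F_t]\leq \exp(c\sigma^2)$ applies and closes the induction with the target exponent. The principal obstacle is the mismatch between the noise hypothesis---which controls only the squared norm $\|\bxi_t\|^2$---and the linear functional $\inn{\nabla f_t(\bx_t)}{\bxi_t}$ produced by the descent inequality; bridging this gap is what forces the Young's step and, consequently, both the $3\sigma^2$ scaling in the definition of $v_t$ and the multiplicative factor $3$ appearing on the right-hand side of the target bound.
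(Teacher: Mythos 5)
Your high-level plan is identical to the paper's: backward induction on $t$, the tower property factoring $\E[\exp(S_t)\mid\F_t] = \E[\exp(Z_t)\,\E[\exp(S_{t+1})\mid\F_{t+1}]\mid\F_t]$, the reduction to a per-step moment bound $\E[\exp(Z_t)\mid\F_t]\le\exp(3\sigma^2 w_t\eta_t^2 L_t/2)$, and the substitution of Lemma~\ref{lem:da-basic-inequality} into the definition of $Z_t$. The difference is in how you propose to close the per-step bound, and there your argument has a gap.

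You suggest applying Young's inequality to the cross term $w_t\eta_t(\eta_tL_t-1)\inn{\nabla f_t(\bx_t)}{\bxi_t}$ with the weight tuned so that the induced $\norm{\nabla f_t(\bx_t)}^2$ contribution exactly cancels $-v_t\norm{\nabla f_t(\bx_t)}^2$. Since $v_t = 3\sigma^2 w_t^2\eta_t^2(\eta_tL_t-1)^2$, this forces the Young parameter to be $\epsilon = 1/(6\sigma^2)$, which adds $\epsilon/2 = 1/(12\sigma^2)$ to the coefficient of $\norm{\bxi_t}^2$. The MGF bound then gives $\exp\bigl((w_t\eta_t^2L_t/2 + 1/(12\sigma^2))\sigma^2\bigr) = \exp\bigl(\sigma^2 w_t\eta_t^2L_t/2 + 1/12\bigr)$, and the target is $\exp(3\sigma^2 w_t\eta_t^2L_t/2)$. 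Matching would require $1/12 \le \sigma^2 w_t\eta_t^2L_t$, i.e.\ a \emph{lower} bound on $w_t\eta_t^2L_t$, but the hypothesis only supplies the upper bound $w_t\eta_t^2L_t\le 1/(2\sigma^2)$. Choosing $\epsilon$ differently does not help: the admissible window $\epsilon\in[1/(6\sigma^2),\,2w_t\eta_t^2L_t]$ is nonempty only when $w_t\eta_t^2L_t\ge 1/(12\sigma^2)$, which again is not guaranteed.

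The underlying issue is that Young's inequality is blind to the sign of the cross term and therefore throws away the structural fact $\E[\bxi_t\mid\F_t]=\mathbf{0}$. Because the inner product is conditionally zero-mean, its MGF contribution ought to be of order $\exp\bigl(O(\sigma^2\norm{\bu_t}^2)\bigr)$ with no additive residual, even when the quadratic coefficient $a_t = w_t\eta_t^2L_t/2$ is arbitrarily small. This is exactly what the helper Lemma~2.2 of \citet{liu2023high} delivers: $\E[\exp(\inn{\bu}{\bxi}+a\norm{\bxi}^2)\mid\F_t]\le\exp(3\sigma^2(\norm{\bu}^2 + a))$ for $0\le a\le 1/(4\sigma^2)$, obtained by an argument that does exploit the centering of $\bxi$. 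The paper simply plugs that lemma in with $\bu = w_t\eta_t(\eta_tL_t-1)\nabla f_t(\bx_t)$ and $a = w_t\eta_t^2L_t/2$; the $3\sigma^2\norm{\bu}^2$ term cancels $v_t\norm{\nabla f_t(\bx_t)}^2$ exactly, leaving precisely the target exponent. To repair your proof you would need to replace the pointwise Young step with a moment-generating-function argument on the cross term that uses its conditional mean-zero property (e.g.\ a conditional Cauchy--Schwarz split $\E[\exp(2\inn{\bu}{\bxi})]^{1/2}\E[\exp(2a\norm{\bxi}^2)]^{1/2}$ together with a centered sub-Gaussian MGF bound for $\inn{\bu}{\bxi}$), or simply invoke the cited lemma as the paper does.
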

Markov's inequality gives us the following guarantee in probability.
\begin{corollary} \label{cor:da-general-guarantee}
If condition \eqref{eq:da-scale-condition} holds for all $1 \leq t \leq T$, then with probability at least $1 - \delta$, we have
\begin{equation} \label{eq:da-general-bound}
    \begin{split}
        &\sum_{t=1}^{T} \left(\left(w_{t} \eta_{t} \left(1 - \frac{\eta_{t} L_{t}}{2}\right) - v_{t}\right) \norm{\nabla f_t(\bx_{t})}^{2} + w_{t}\left(\Delta_{t+1}f_{t} - \Delta_{t}f_{t}\right)\right) \\
        &\leq 3 \sigma^{2} \sum_{t=1}^{T} \frac{w_{t} \eta_{t}^{2} L_{t}}{2} + \log \frac{1}{\delta}.
    \end{split}
\end{equation}
\end{corollary}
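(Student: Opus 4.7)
The plan is to derive the corollary as an essentially mechanical consequence of Theorem~\ref{thm:da-moment-inequality} applied at the index $t=1$, combined with a one-line Markov inequality. Nothing substantially new happens here; the content is entirely packaged in the exponential moment bound \eqref{eq:da-key-inequality}, and the corollary just translates this into a tail statement and unfolds the definitions.

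Concretely, I would first specialize Theorem~\ref{thm:da-moment-inequality} to $t=1$, which under the hypothesis \eqref{eq:da-scale-condition} yields
\[
\E\!\left[\exp(S_1)\right] \;\le\; \exp\!\left(3\sigma^{2}\sum_{t=1}^{T}\frac{w_{t}\eta_{t}^{2}L_{t}}{2}\right).
\]
Then I apply Markov's inequality to the nonnegative random variable $\exp(S_1)$ at the level $\exp\!\bigl(\log(1/\delta)+3\sigma^{2}\sum_{t=1}^{T}\tfrac{w_{t}\eta_{t}^{2}L_{t}}{2}\bigr)$. The moment bound cancels the exponential factor, leaving a failure probability of at most $\delta$. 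Taking logarithms of the resulting event gives, with probability at least $1-\delta$,
\[
S_1 \;\le\; 3\sigma^{2}\sum_{t=1}^{T}\frac{w_{t}\eta_{t}^{2}L_{t}}{2} + \log\frac{1}{\delta}.
\]

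The final step is to unfold the definition of $S_1=\sum_{t=1}^{T}Z_t$. Substituting $Z_t = w_t C_t - v_t\|\nabla f_t(\bx_t)\|^2$ and then plugging in the expression for $C_t$ from Lemma~\ref{lem:da-basic-inequality}, the coefficient of $\|\nabla f_t(\bx_t)\|^2$ becomes precisely $w_t\eta_t(1-\eta_tL_t/2)-v_t$, and the telescoping-like term contributes $w_t(\Delta_{t+1}f_t-\Delta_tf_t)$. This reproduces the left-hand side of \eqref{eq:da-general-bound} verbatim.

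There is no real obstacle in this proof, in the sense that the heavy lifting (controlling the martingale contributions of the noise) has already been absorbed into Theorem~\ref{thm:da-moment-inequality}. The only point worth being careful about is bookkeeping: one must verify that the condition \eqref{eq:da-scale-condition} is what is needed to invoke the theorem at $t=1$ (it is, since the hypothesis is imposed uniformly over $1\le t\le T$), and that the additive constant $\log(1/\delta)$ carries the correct sign after taking logs. After that, the corollary is immediate.
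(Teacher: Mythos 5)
Your proposal matches the paper's proof exactly: specialize Theorem~\ref{thm:da-moment-inequality} to $t=1$, apply Markov's inequality to $\exp(S_1)$, and unfold the definition of $S_1=\sum_{t=1}^T Z_t$ using Lemma~\ref{lem:da-basic-inequality}. The argument and bookkeeping are both correct.
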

Equipped with Lemmas \ref{lem:da-basic-inequality} and \ref{thm:da-moment-inequality}, we are ready to prove the central lemma for SDA by specifying
the choice of $w_{t}$ that satisfy the condition of Lemma \ref{thm:da-moment-inequality}.
\begin{lemma} \label{lem:da-central-bound} Suppose the learning rates $\{\eta_{t}\}$ is non-increasing with $\eta_t L_t \leq 1$, then for any $\delta > 0$, and define the auxiliary function $f_{0}(\bx) := f(\bx) + \gamma_0/2 \norm{\bx}^{2}$, where $\gamma_0$ is an arbitrary constant. Then, the following event holds with probability at least $1 - \delta$:
\begin{equation} \label{eq:da-central-bound}
    \begin{split}
        &\sum_{t=1}^{T} \eta_t \norm{\nabla f_t(\bx_{t})}^{2} + 2 \left(\Delta_{T+1} f_{T+1} - \Delta_{1} f_{1}\right) + \sum_{t=1}^{T} \left(f_{t-1}(\bx_{t}) - f_t(\bx_{t})\right) \\
        &\leq 3 \sigma^{2} \sum_{t=1}^{T} \eta_{t}^{2} L_{t} + 12 \sigma^{2} \eta_{1} \log \frac{1}{\delta}.
    \end{split}
\end{equation}
\end{lemma}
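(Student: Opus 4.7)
The plan is to invoke Corollary~\ref{cor:da-general-guarantee} with the constant weight $w_t \equiv 1/(6\sigma^2 \eta_1)$, then rescale the resulting high-probability inequality by $12\sigma^2 \eta_1$. First I would verify the scaling condition~\eqref{eq:da-scale-condition}: using $\eta_t L_t \le 1$ and $\eta_t \le \eta_1$,
\[
w_t \eta_t^2 L_t \;=\; \frac{\eta_t^2 L_t}{6\sigma^2 \eta_1} \;\le\; \frac{\eta_t}{6\sigma^2 \eta_1} \;\le\; \frac{1}{6\sigma^2} \;\le\; \frac{1}{2\sigma^2}.
\]
The multiplication by $12\sigma^2 \eta_1$ sends the telescope coefficient $w_t$ to exactly $2$ (matching the factor in front of $\Delta_{T+1} f_{T+1} - \Delta_1 f_1$ in the statement), the variance sum $(3\sigma^2/2)\,w_t \eta_t^2 L_t$ to $3\sigma^2 \eta_t^2 L_t$, and the Markov tail $\log(1/\delta)$ to the claimed $12\sigma^2 \eta_1 \log(1/\delta)$.

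Next I would show that the rescaled gradient coefficient $(2\eta_t - \eta_t^2 L_t) - \eta_t^2(1-\eta_t L_t)^2/\eta_1$ dominates $\eta_t$. Subtracting $\eta_t$ and factoring yields
\[
\eta_t(1-\eta_t L_t) \;-\; \frac{\eta_t^2(1-\eta_t L_t)^2}{\eta_1} \;=\; \eta_t(1-\eta_t L_t)\left(1 - \frac{\eta_t(1-\eta_t L_t)}{\eta_1}\right),
\]
which is non-negative since $\eta_t L_t \le 1$ and $\eta_t(1-\eta_t L_t) \le \eta_t \le \eta_1$.

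Finally I would reorganise $2\sum_{t=1}^T (f_t(\bx_{t+1}) - f_t(\bx_t))$ via the split $f_t(\bx_{t+1}) - f_t(\bx_t) = (f_{t+1}(\bx_{t+1}) - f_t(\bx_t)) + (f_t(\bx_{t+1}) - f_{t+1}(\bx_{t+1}))$. The first piece telescopes to $\Delta_{T+1} f_{T+1} - \Delta_1 f_1$, and the second, after the index shift $s = t+1$ and using $\bx_1 = \bzero$ to kill the $s = 1$ boundary contribution (which is where the arbitrary auxiliary $\gamma_0$ enters harmlessly), equals $\sum_{s=2}^{T+1}(f_{s-1}(\bx_s) - f_s(\bx_s))$. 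Splitting off the $s = T+1$ contribution gives
\[
2\sum_{t=1}^T \bigl(f_t(\bx_{t+1}) - f_t(\bx_t)\bigr) = 2(\Delta_{T+1} f_{T+1} - \Delta_1 f_1) + 2\sum_{t=1}^T \bigl(f_{t-1}(\bx_t) - f_t(\bx_t)\bigr) + 2\bigl(f_T(\bx_{T+1}) - f_{T+1}(\bx_{T+1})\bigr),
\]
and lower-bounding by dropping the non-negative pieces $\sum_{t=1}^T (f_{t-1}(\bx_t) - f_t(\bx_t))$ and $2(f_T(\bx_{T+1}) - f_{T+1}(\bx_{T+1}))$ (valid whenever $\gamma_t$ is non-increasing, as holds for $\eta_t \propto 1/\sqrt{t}$) yields the stated left-hand side.

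The main obstacle is identifying the right weighting scheme. The naive choice $w_t \equiv 2$ matches the desired telescope factor but leaves $\log(1/\delta)$ uncoupled from $\sigma^2 \eta_1$, and the quadratic-in-$w_t$ term $v_t \|\nabla f_t\|^2 \le 12\sigma^2 \eta_t^2 \|\nabla f_t\|^2$ is then too large to absorb into $2\eta_t - \eta_t^2 L_t$ without losing a multiplicative constant proportional to $\sigma^2 \eta_1$ that one cannot fold back into the tail. Choosing $w_t$ inversely proportional to $\sigma^2 \eta_1$ is what makes both the gradient coefficient and the Markov tail come out cleanly after a single rescaling; the exact constant $1/(6\sigma^2 \eta_1)$ is calibrated so that the post-rescaling subtraction is $\eta_t^2(1-\eta_t L_t)^2/\eta_1 \le \eta_t$, which is precisely what preserves the coefficient $\eta_t$ in front of $\|\nabla f_t(\bx_t)\|^2$.
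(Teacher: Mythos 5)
Your proposal is correct and follows the same overall route as the paper's proof: the choice $w_t \equiv 1/(6\sigma^2\eta_1)$, the check of the scaling condition~\eqref{eq:da-scale-condition}, and the final rescaling by $12\sigma^2\eta_1 = 2/w$ after invoking Corollary~\ref{cor:da-general-guarantee} all coincide. Where you diverge is in the two technical lower bounds, and your versions are somewhat cleaner. For the gradient coefficient, you factor the rescaled excess as $\eta_t(1-\eta_t L_t)\bigl(1 - \eta_t(1-\eta_t L_t)/\eta_1\bigr) \ge 0$, which makes the use of $\eta_t L_t \le 1$ and $\eta_t \le \eta_1$ transparent; the paper instead reduces to the scalar inequality $1 - \tfrac{x}{2} - \tfrac{1}{2}(1-x)^2 \ge \tfrac{1}{2}$ on $[0,1]$ (stated with a typographical slip involving $\sqrt{t}$). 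For the telescope, you split with a forward shift $f_t(\bx_{t+1}) - f_t(\bx_t) = \bigl(f_{t+1}(\bx_{t+1}) - f_t(\bx_t)\bigr) + \bigl(f_t(\bx_{t+1}) - f_{t+1}(\bx_{t+1})\bigr)$, which telescopes exactly to $\Delta_{T+1}f_{T+1} - \Delta_1 f_1$ at the price of a residual boundary term $f_T(\bx_{T+1}) - f_{T+1}(\bx_{T+1})$ (and one extra copy of $\sum_t(f_{t-1}(\bx_t)-f_t(\bx_t))$), which you discard using $\gamma_t$ non-increasing. The paper uses the backward shift $f_t(\bx_{t+1}) - f_{t-1}(\bx_t)$, which telescopes to $f_T(\bx_{T+1}) - f_0(\bx_1)$, and then silently replaces this by $\Delta_{T+1}f_{T+1} - \Delta_1 f_1$ and drops a factor of two on $\sum_t(f_{t-1}(\bx_t)-f_t(\bx_t))$ --- steps that are also only valid under the same monotonicity of $\gamma_t$, but which the paper does not call out. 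Your write-up makes this hidden hypothesis explicit, which is an improvement in transparency even though the two proofs are mathematically equivalent.
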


\begin{theorem} \label{thm:da-convergence}
Assume $f(\cdot)$ is $L$-smooth and satisfies \ref{eq:lower-boundedness}, \ref{eq:unbiased-estimator}, and \ref{eq:subgaussian-noise}. 
Setting $\eta_{t} = 1 / \left(L + \sigma \sqrt{t}\right)$, then with probability at least $1 - \delta$, the iterate sequence $\{x_{t}\}_{t \geq 1}$
output by SDA satisfies
\begin{equation} \label{eq:da-convergence}
    \frac{1}{T} \sum_{t=1}^{T} \norm{\nabla f(x_{t})}^{2} \leq \left(6 \Delta + 18 \left(L + \sigma\right) \log\left(1 + \frac{\sigma \sqrt{T}}{L}\right) + \frac{36 \sigma^{2}}{L + \sigma} \log\frac{1}{\delta}\right) \frac{\left(L + \sigma \sqrt{T}\right)}{T}.
\end{equation}      
\end{theorem}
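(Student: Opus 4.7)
The plan is to use Lemma~\ref{lem:da-central-bound} as the scaffold and convert its guarantee on the regularised gradients $\{\nabla f_t\}$ into one on the true gradients $\{\nabla f\}$. Plugging in $\eta_t = 1/(L+\sigma\sqrt{t})$ and setting $1/\eta_0 := L$ gives $\gamma_t = 1/\eta_t - 1/\eta_{t-1} = \sigma(\sqrt{t}-\sqrt{t-1}) \le \sigma$, hence $L_t = L+\gamma_t \le L+\sigma$ and $\eta_t L_t \le (L+\sigma)/(L+\sigma\sqrt{t}) \le 1$, satisfying the hypothesis of Lemma~\ref{lem:da-central-bound}. Because $\bx_1 = \bzero$ we have $\Delta_1 f_1 = \Delta$, while $\Delta_{T+1} f_{T+1} \ge 0$ is dropped, and the definition of $f_t$ gives the identity $f_{t-1}(\bx_t) - f_t(\bx_t) = \tfrac{\gamma_{t-1}-\gamma_t}{2}\|\bx_t\|^2$, which is nonnegative since $\gamma_t$ is decreasing in $t$. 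The central inequality \eqref{eq:da-central-bound} therefore collapses to
\[
\sum_{t=1}^T \eta_t\|\nabla f_t(\bx_t)\|^2 + \sum_{t=1}^T \tfrac{\gamma_{t-1}-\gamma_t}{2}\|\bx_t\|^2 \le 2\Delta + 3\sigma^2\sum_{t=1}^T \eta_t^2 L_t + 12\sigma^2 \eta_1 \log(1/\delta).
\]

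Next, I would apply Young's inequality to the splitting $\nabla f_t(\bx_t) = \nabla f(\bx_t) + \gamma_t \bx_t$ with parameter $c = 1/3$, obtaining $\|\nabla f_t(\bx_t)\|^2 \ge \tfrac{1}{3}\|\nabla f(\bx_t)\|^2 - \tfrac{1}{2}\gamma_t^2 \|\bx_t\|^2$. Substituting and collecting the quadratic-in-$\bx_t$ terms leaves a combined coefficient of $\tfrac{\gamma_{t-1}-\gamma_t}{2} - \tfrac{1}{2}\eta_t\gamma_t^2$, which is nonnegative precisely when $\gamma_{t-1}-\gamma_t \ge \eta_t\gamma_t^2$. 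This is exactly the condition already established in Lemma~\ref{lem:tech_main} for the present step sizes (they are the $\rho=0$, $\alpha = \sigma/L$ specialisation when $\sigma \le L$; the complementary case $\sigma > L$ can be checked directly using $\gamma_t = \sigma/(\sqrt{t}+\sqrt{t-1})$ and rationalising the relevant square-root differences).

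It then remains to estimate the residual sums. Dropping the nonnegative $\|\bx_t\|^2$ contribution and lower bounding $\eta_t \ge \eta_T = 1/(L+\sigma\sqrt{T})$ on the left gives $\tfrac{1}{3}\eta_T \sum_t \|\nabla f(\bx_t)\|^2 \le 2\Delta + 3\sigma^2\sum_t \eta_t^2 L_t + 12\sigma^2 \eta_1 \log(1/\delta)$. Using $L_t \le L+\sigma$ and the change of variables $u = \sqrt{t}$ inside $\int_0^T dt/(L+\sigma\sqrt{t})^2$ (followed by partial fractions) yields $\sum_t \eta_t^2 L_t \le (L+\sigma)\sum_t \eta_t^2 \le 2(L+\sigma)/\sigma^2 \cdot \log(1+\sigma\sqrt{T}/L)$; substituting $\eta_1 = 1/(L+\sigma)$ and multiplying through by $3(L+\sigma\sqrt{T})/T$ produces precisely the constants $6\Delta$, $18(L+\sigma)\log(1+\sigma\sqrt{T}/L)$, and $36\sigma^2/(L+\sigma)\log(1/\delta)$ appearing in \eqref{eq:da-convergence}. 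The main obstacle is the algebraic verification of $\gamma_{t-1}-\gamma_t \ge \eta_t\gamma_t^2$: both sides scale like $\sigma/(4t^{3/2})$ at leading order, so the inequality is asymptotically tight and the choice $c=1/3$ in Young's inequality is essentially forced by this match.
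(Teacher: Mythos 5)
Your proposal tracks the paper's own proof step by step: Lemma~\ref{lem:da-central-bound} as the scaffold, the same Young's-inequality split $\norm{\nabla f_t(\bx_t)}^2 \ge \tfrac{1}{3}\norm{\nabla f(\bx_t)}^2 - \tfrac{1}{2}\gamma_t^2\norm{\bx_t}^2$ (this is exactly Lemma~\ref{lem:norm-decomposition} with $\lambda=2/3$), the same telescoping identity $f_{t-1}(\bx_t)-f_t(\bx_t)=\tfrac{\gamma_{t-1}-\gamma_t}{2}\norm{\bx_t}^2$, the same reduction to the nonnegativity condition $\gamma_{t-1}-\gamma_t \ge \gamma_t^2\eta_t$, and the same integral estimate of $\sum_t \eta_t^2 L_t$ with $L_t\le L+\sigma$. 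The only deviation is the reference for the nonnegativity condition: you invoke Lemma~\ref{lem:tech_main}, which is stated for $\eta_t=\tfrac{1}{L(1+\rho)(1+\rho+\alpha\sqrt{t})}$ with $\alpha=\min\{\sigma/(L(1+\rho)),1\}$ and therefore only covers $\sigma\le L$ at $\rho=0$ — you correctly flag that $\sigma>L$ needs a separate check. The paper instead appeals to Lemma~\ref{lem:offseting}, which proves $\gamma_{t-1}-\gamma_t-\gamma_t^2\eta_t\ge 0$ for $\eta_t=\eta/(1+\alpha\sqrt{t})$ with \emph{any} $\alpha\ge 0$, covering both regimes uniformly. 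Citing that lemma would close the small gap you acknowledge; otherwise the proof is sound and identical in substance.
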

Again, in the case where the time horizon $T$ is unknown to the algorithm, by choosing the step size $\eta$ in Theorem \ref{thm:da-convergence}, the bound is adaptive to noise, i.e, when $\sigma = 0$ we recover $\mathcal{O}(1 / T)$ convergence rate of the (deterministic) gradient descent algorithm.
\section{Adaptive Dual Averaging}
\label{sec:ada-DA}

In the previous section, we showed that it is possible to match the rate of SGD with stochastic SDA on non-convex smooth functions. However, the optimal learning rate in \eqref{eq:eta} depends on the unknown variance of the noise, $\sigma^2$. Here, we show that, as for SGD, we can design an adaptive version of SDA, using AdaGrad-norm stepsizes~\citep{StreeterM10}, given by
\begin{equation*}
    \eta_t \;=\; \frac{\eta}{\sqrt{\gamma + \sum_{i=1}^{t}\!\norm{\bg_i}^2}}\, .
\end{equation*}
With this choice we can bound in Theorem~\ref{thm:ada} the average squared gradient as  
\[
    \mathcal{O}\left( \sigma \sqrt{\frac{\E\left[\max_{t\le T} \norm{\bx_t-\bx^*}^4\right]}{T}}+ \E\left[\max_{t\le T} \norm{\bx_t-\bx^*}^4\right]\right).
\]
Hence, when the iterates remain bounded we recover the optimal rate. Similar bounds with a dependence on the maximal iterate norm, were presented in earlier works for SGD with adaptive step sizes \cite{DuchiHS11}.
\begin{theorem}\label{thm:ada}
    Assume $f(\cdot)$ is $L$-smooth and satisfies \ref{eq:lower-boundedness}, \ref{eq:unbiased-estimator}, and \ref{eq:strong-growth}. Consider SDA iterates using step sizes
    \[
        \eta_t = \frac{\eta}{\sqrt{\gamma + \sum_{i=1}^t \norm{\bg_i}^2}}~.
    \] 
    Let $\bx^*$ be a global minimizer and $B_T := \max_{t\le T} \ \left\{ \norm{\bx_t - \bx^*}\norm{\bx_{t}} + \frac{1}{2} \norm{\bx_t - \bx^*}^2 +\norm{\bx_{t}}^2\right\}~$. Then, we have
    \begin{align*}
        \sum_{t=1}^T \mathbb{E}\left[ \norm{\nabla f(\bx_t)}^2\right] 
        &\le 2\sqrt{2}L\sigma \left(\mathbb{E}\left[\left(\frac{B_T}{\eta}+2\eta\right)^2 \right]\right)^{1/2} \sqrt{T} \\
		&\quad + 2 \sqrt{2}L^2\left(\sqrt{2}+\rho\right) \mathbb{E}\left[\left(\frac{B_T}{\eta}+2\eta\right)^2 \right]
        + \frac{\gamma}{2}+\frac{2\left(f(\mathbf{0})-f^*\right)}{\eta}~.
    \end{align*}
\end{theorem}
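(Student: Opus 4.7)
My plan is to lift the argument of Theorem~\ref{thm:exp} to the adaptive regime via Proposition~\ref{prop:DA-reformulation}, which re-expresses each SDA update as an SGD step on the time-varying surrogate $f_t(\bx)=f(\bx)+\tfrac{\gamma_t}{2}\norm{\bx}^2$ with $L_t=L+\gamma_t$ and $\gamma_t=1/\eta_t-1/\eta_{t-1}$, and then to absorb the data-dependence of $\eta_t$ through the standard AdaGrad-norm telescoping
\[
\sum_{t=1}^T \eta_t\norm{\bg_t}^2 = \eta\sum_{t=1}^T\frac{\norm{\bg_t}^2}{\sqrt{\gamma+\sum_{i\le t}\norm{\bg_i}^2}} \le 2\eta\sqrt{\gamma+\sum_{t=1}^T\norm{\bg_t}^2} = \frac{2\eta^2}{\eta_T}~.
\]

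The first phase mirrors the proof sketch of Theorem~\ref{thm:exp}: I would apply the $L_t$-smooth descent inequality for $f_t$ at the noisy gradient $\hn_t(\bx_t)=\bg_t+\gamma_t\bx_t=\nabla f_t(\bx_t)+\bxi_t$, sum over $t=1,\dots,T$, and telescope using $f_{t-1}(\bx_t)-f_t(\bx_t)=\tfrac{\gamma_{t-1}-\gamma_t}{2}\norm{\bx_t}^2$. The bound reduces to controlling the signal $\sum_t\eta_t\norm{\nabla f_t(\bx_t)}^2$, the stochastic contributions $\sum_t\eta_t\langle\nabla f_t(\bx_t),\bxi_t\rangle$ and $\sum_t\eta_t^2L_t\norm{\bxi_t}^2$, and the $\gamma$-telescope $\sum_t(\gamma_{t-1}-\gamma_t)\norm{\bx_t}^2$, which I would bind to $B_T$ after rebalancing with $\bx^*$. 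Passing from $\nabla f_t$ back to $\nabla f$ uses $\norm{\nabla f(\bx_t)}^2\le 2\norm{\nabla f_t(\bx_t)}^2+2\gamma_t^2\norm{\bx_t}^2$, while $\norm{\nabla f(\bx_t)}\le L\norm{\bx_t-\bx^*}$ (from $\nabla f(\bx^*)=\mathbf{0}$ and $L$-smoothness) lets the SGC noise be re-expressed as an iterate quantity, explaining the $L^2$ scaling and the $(\sqrt{2}+\rho)$ factor. For the quadratic noise I would combine the AdaGrad identity above with the SDA relation $\bx_{t+1}=-\eta_t\sum_{i\le t}\bg_i$: unfolding $\norm{\bg_t}^2\le 2\norm{\nabla f(\bx_t)}^2+2\norm{\bxi_t}^2$ and expanding the inner products converts the AdaGrad envelope into terms of the form $\norm{\bx_t-\bx^*}\norm{\bx_t}$, $\tfrac12\norm{\bx_t-\bx^*}^2$ and $\norm{\bx_t}^2$, whose maxima over $t\le T$ are exactly the three summands defining $B_T$, yielding the $B_T/\eta+2\eta$ envelope.

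The most delicate step is the linear cross term $\eta_t\langle\nabla f_t(\bx_t),\bxi_t\rangle$, which is not a martingale difference because $\eta_t$ depends on $\bg_t$. The standard fix is to introduce the ``delayed'' proxy $\tilde\eta_t:=\eta/\sqrt{\gamma+\sum_{i<t}\norm{\bg_i}^2}$ and split $\eta_t=\tilde\eta_t-(\tilde\eta_t-\eta_t)$ with $0\le\tilde\eta_t-\eta_t\le\tilde\eta_t\eta_t\norm{\bg_t}^2/(2\eta)$: the $\tilde\eta_t$-piece vanishes in conditional expectation, while the stability residual is absorbed via SGC and AM--GM, reusing the quadratic envelope obtained above. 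A final Cauchy--Schwarz at the level of the expectation pulls $\sqrt{\cdot}$ outside, producing the shape $\bigl(\E[(B_T/\eta+2\eta)^2]\bigr)^{1/2}\sqrt T$ of the noise term, and collecting the deterministic constants $\gamma/2$ and $(f(\mathbf{0})-f^*)/\eta$ recovers the stated bound. The main obstacle is precisely this adaptivity: the coupling between $\eta_t$ and $\bg_t$ prevents direct martingale control, so the linear cross term and the AdaGrad telescope must be handled together, and the translation of $\sum_t\norm{\bg_t}^2$ into $B_T$ relies on the uniform-weight structure of SDA together with a careful cancellation of the $\gamma_t$ contributions to $L_t$ against the telescope they themselves generate.
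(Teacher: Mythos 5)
Your plan takes a genuinely different route from the paper's, and it contains a gap that is unlikely to be repairable without extra assumptions. You propose to replay the Theorem~\ref{thm:exp} analysis: invoke Proposition~\ref{prop:DA-reformulation}, apply the $L_t$-smooth descent inequality to the surrogates $f_t$, and extract a term of the form $\eta_t\bigl(1-\tfrac{\eta_t L_t}{2}\bigr)\norm{\nabla f_t(\bx_t)}^2$. The trouble is that with AdaGrad-norm step sizes, $\gamma_t = 1/\eta_t - 1/\eta_{t-1}$ is random and data-dependent, so $\eta_t L_t = \eta_t L + 1 - \eta_t/\eta_{t-1}$ is not under deterministic control. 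In particular, when $\gamma$ is small or zero (which the paper explicitly allows and even recommends in the remark following the theorem), $\eta_1 L = \eta L/\norm{\bg_1}$ can be arbitrarily large, so $1 - \eta_t L_t/2$ can be negative and the descent coefficient loses its sign. The deterministic lemmas (Lemma~\ref{lem:tech}) that rescue this sign in the strong-growth analysis have no adaptive analogue, and your proposal supplies none. Moreover, even if this hurdle were cleared, you would end up with $\sum_t \eta_t \norm{\nabla f_t(\bx_t)}^2$, not $\sum_t \E[\norm{\nabla f(\bx_t)}^2]$; with adaptive $\eta_t$, stripping that factor requires a separate lower bound on $\eta_T$ in terms of $\sum_t\norm{\bg_t}^2$, which re-introduces everything you are trying to control.

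The paper sidesteps all of this by \emph{not} using the $f_t$-surrogate descent lemma. It uses $L$-smoothness of $f$ directly on the rewritten update $\bx_{t+1}=\bx_t-\eta_t\bg'_t$ with $\bg'_t = \bg_t + (1-\eta_{t-1}/\eta_t)\btheta_{t-1}$, then \emph{divides through by $\eta_t$} to isolate $\nabla f(\bx_t)^\top\bg'_t$ on the left. After expanding $\bg'_t$, the quantity that survives on the left is $\nabla f(\bx_t)^\top\bg_t$ with no $\eta_t$ factor, so the martingale issue you flag (correctly, for your route) simply evaporates: $\E[\nabla f(\bx_t)^\top\bg_t] = \E[\norm{\nabla f(\bx_t)}^2]$ by Assumption~\ref{ass:unbiased-estimator}, without any delayed-proxy argument. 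All the $\gamma_t$-dependence is absorbed into telescoping sums $\sum_t (1/\eta_t - 1/\eta_{t-1}) \le 1/\eta_T = \sqrt{S_T}/\eta$, uniformly bounded by $B_T$, and the adaptivity is then handled by splitting $\sqrt{S_T}$ into signal and noise parts via Cauchy--Schwarz and the inequality $ax - x^2/4 \le a^2$. As a small additional point, your attribution of the $(\sqrt{2}+\rho)$ constant to the conversion $\norm{\nabla f}^2 \le 2\norm{\nabla f_t}^2 + 2\gamma_t^2\norm{\bx_t}^2$ is not how it arises in the paper: the $\rho$ comes from applying SGC inside a Cauchy--Schwarz step, and the $\sqrt{2}$ from combining that with the AM--GM term $2(LB_T/\eta + 2L\eta)^2$.
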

\begin{proof}[Proof sketch]
	Full details are deferred to Appendix~\ref{sec:pr_ada}; here we record the main chain of inequalities in continuous prose.  
	Because SDA keeps the dual accumulator $\btheta_t=\sum_{i=1}^{t}\bg_i$, its recursion can be written as  
	\[
	\bx_{t+1}=-\eta_t\btheta_t =\bx_t-\eta_t\!\bigl(\bg_t+(1-\eta_{t-1}/\eta_t)\btheta_{t-1}\bigr) =\bx_t-\eta_t\bg'_t,
	 \]
	where  \( \bg'_t:=\bg_t+\bigl(1-\eta_{t-1}/\eta_t\bigr)\btheta_{t-1}\) and  \(\eta_t=\eta/\sqrt{\gamma+\sum_{i=1}^{t}\|\bg_i\|^{2}}=\eta/\sqrt{S_t}.\) The $L$-smoothness of $f$ yields the descent estimate  
	\[
		f(\bx_{t+1})-f(\bx_t) \le-\eta_t\langle\nabla f(\bx_t),\bg'_t\rangle +\tfrac{L}{2}\eta_t^{2}\|\bg'_t\|^{2}.
	\]
	Summing over $t$ and expanding $\bg'_t$ produces a telescoping term  $\Delta_1/\eta+\sum_{t}(1/\eta_t-1/\eta_{t-1})\Delta_t$
	plus the variance contribution  $\frac{L}{2}\!\sum_{t}\eta_t\|\bg'_t\|^{2}$. Gives
	\[
		\sum_{t=1}^T \nabla f(\bx_t)^{\top}\bg_t \le \frac{\Delta_1}{\eta} - \sum_{t=1}^T \!\left(\nabla f(\bx_t)^{\top} \bx_{t} - \Delta_t - L\norm{\bx_{t}}^2\right) \!\left(\frac{1}{\eta_t}-\frac{1}{\eta_{t-1}}\right) \! + L\eta \sum_{t=1}^T \frac{\norm{\bg_t}^2}{\sqrt{S_t}}.
	\]
	Since  \(\sum_{t}\|\bg_t\|^{2}/\sqrt{S_t}\le2\sqrt{S_T},\) the latter collapses to $2L\eta\sqrt{S_T}$.  
	Next, the strong-growth condition together with the smoothness bound  \( \Delta_t\le\frac{L}{2}\|\bx_t-\bx^*\|^{2}\) and the inequality  \(\|\nabla f(\bx_t)\|\le L\|\bx_t-\bx^*\|\)
	allow every term involving $\bx_t$ or $\bx_t-\bx^*$ to be controlled by the single radius $B_T$.  
	A direct rearrangement then shows
	\[
	\sum_{t=1}^{T}\langle\nabla f(\bx_t),\bg_t\rangle \le \frac{\Delta_1}{\eta} +2\Bigl(\tfrac{LB_T}{\eta}+2L\eta\Bigr)^{2} +\frac{1}{4}\sum_{t=1}^{T}\|\nabla f(\bx_t)\|^{2} +\frac{\gamma}{4} +\sqrt{2}\Bigl(\tfrac{LB_T}{\eta}+2L\eta\Bigr) \sqrt{\sum_{t=1}^{T}\|\bxi_t\|^{2}}.
	\]
	Finally, taking expectations and applying Cauchy-Schwarz to the noise term changes  \(\sqrt{\sum_{t}\|\bxi_t\|^{2}}\)
	into $\sqrt{T}\,\sigma$, completing the derivation of the bound claimed in the theorem. 
\end{proof}

\begin{remark}
Observe that the upper bound is minimized when $\gamma=0$. Indeed, we can safely use $\gamma=0$ by observing that we can avoid divisions by zero by not updating the iterates every time the stochastic gradient is zero.
\end{remark}

Although Theorem \ref{thm:ada} delivers the desired convergence rate whenever the iterates remain bounded, it still falls short of the \emph{iterate-independent} upper-bounds guarantees obtained in Section \ref{sec:strong-growth} for deterministic step sizes. The main difficulty stems from the SDA update, given by
$\bx_{t+1} - \bx_t =  - \eta_t \bg_t +\left(1-\eta_t/\eta_{t-1}\right) \bx_t$, 
which includes an extra term in $\bx_t$ absent from the SGD recursion.  
With deterministic step sizes, the ratio between factors $\eta_t$ and $\lvert \eta_t/\eta_{t-1}-1 \rvert$ is $\Theta(1/\sqrt{t})$, so this term is can be controlled; under the AdaGrad rule no such uniform decay holds.  
As a result, the update direction becomes a delayed, biased combination of past gradients, and in non-convex landscapes those past gradients could be uninformative for the current update.
Classical techniques such as the descent lemma therefore do not suffice, and developing tight, iterate-independent convergence guarantees for adaptive SDA remains an open research avenue.

\section{Conclusion, Limitations and Future Work}
\label{sec:conclusion}

We have revisited \emph{dual averaging} through the lens of smooth non-convex optimisation and provided the first \textit{iterate-level} guarantees that match the $\mathcal{O}(T^{-1/2})$ complexity enjoyed by carefully tuned SGD, without restrictive assumptions.
Concretely, we first proved sharp convergence bounds for deterministic step sizes under the strong-growth condition; next, we extended these guarantees to high-probability statements in the presence of sub-Gaussian noise; and finally, we established adaptive rates for AdaGrad-style learning rates that apply whenever the iterates remain bounded.

\paragraph{Limitations and Future Work.} Our analysis comes with a few caveats. First, the high probability guarantees rely on sub-Gaussian gradient noise. Yet several empirical studies \citep{chen2021heavy,nguyen2019first} reveal that stochastic-gradient noise can be heavy-tailed in certain deep-learning tasks, for instance BERT transformer model for natural language processing. It suggests that the sub-Gaussian setting may be overly optimistic, in which case, our rates may no longer hold. Second, the adaptive Ada-DA variant only converges provably when the iterates are bounded --- obtaining truly iterate-independent, adaptive bounds remains an open problem.

A promising next step is to derive \emph{iterate-independent} guarantees for DA equipped with fully adaptive step sizes, paralleling the bounds already known for SGD with AdaGrad. Achieving this will likely require new control of the cumulative gradient history; one avenue is to adapt the bias-correction arguments introduced for momentum-type first-order methods. In particular, the analyses developed in the case of Adam and related optimizers (see e.g., \citet{zhou2018convergence}, \citet{chen2018convergence}) face the same core challenge of handling updates that combine all past gradients with time-varying weights.

\newpage

\bibliographystyle{plainnat_nourl}
\bibliography{references}

\newpage

\appendix
\section{Appendix} 
\label{sec:appendix}

\subsection{Missing Proofs in Section \ref{sec:strong-growth}}
\label{sec:proof_sg}
\begin{reptheorem}[Restatement of Theorem~\ref{thm:exp}]
  Suppose the $(\rho, \sigma)$-strong-growth condition holds, and that $f$ is $L$-smooth, then SDA iterates with step sizes
  \[
  \eta_t = \frac{1}{L(1+\rho) (1+\rho + \alpha \sqrt{t})},
  \] 
  where  $\alpha = \min \left\lbrace \frac{\sigma}{L(1+\rho)}, 1 \right\rbrace$, satisfy
  \begin{align*}
  	&\frac{1}{T}\sum_{t=1}^T \mathbb{E}\left[ \norm{\nabla f(\bx_t)}^2\right] \\
  	&\le \left(10 \Delta + 4 \left(\frac{\sigma^2}{L(1+\rho)}+ L(1+\rho)\right)\log(1+\alpha \sqrt{T}) \right) \frac{L(1+\rho)^2 + \sigma \sqrt{T}}{T}.
  \end{align*}
  where $\Delta = f(\mathbf{0})-f^*$.
\end{reptheorem}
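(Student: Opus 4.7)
The plan is to combine Proposition~\ref{prop:DA-reformulation} with the smoothness descent inequality for each surrogate $f_t(\bx)=f(\bx)+\tfrac{\gamma_t}{2}\|\bx\|^2$. Since SDA is equivalent to SGD with stepsize $\eta_t$ on $f_t$, and $f_t$ is $L_t$-smooth with $L_t=L+\gamma_t$, I would apply the standard stochastic descent lemma to obtain
\[
    f_t(\bx_{t+1}) \le f_t(\bx_t) - \eta_t\!\left(1-\tfrac{\eta_t L_t}{2}\right)\langle\nabla f_t(\bx_t),\nabla f_t(\bx_t)+\bxi_t\rangle + \tfrac{\eta_t^2 L_t}{2}\|\nabla f_t(\bx_t)+\bxi_t\|^2,
\]
exploiting that the stochastic noise of the SGD-on-$f_t$ update equals the original $\bxi_t$ because the regularizer is deterministic.

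Next, I would build a telescope across $t$ by adding and subtracting $f_{t-1}(\bx_t)$: the differences $f_t(\bx_{t+1})-f_{t-1}(\bx_t)$ sum to $f_T(\bx_{T+1})-f_0(\bx_1)$, which is lower bounded by $-\Delta$, while the residual $f_{t-1}(\bx_t)-f_t(\bx_t)=\tfrac{\gamma_{t-1}-\gamma_t}{2}\|\bx_t\|^2$ is nonnegative under the prescribed (decreasing-$\gamma_t$) schedule. Taking conditional expectations kills the $\langle\nabla f_t(\bx_t),\bxi_t\rangle$ term by unbiasedness, and the strong growth condition replaces the noise magnitude by $\rho\|\nabla f(\bx_t)\|^2+\sigma^2$.

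The step I expect to be the central difficulty is translating the bound on $\|\nabla f_t(\bx_t)\|^2$ into a bound on $\|\nabla f(\bx_t)\|^2$. Expanding $\nabla f_t(\bx_t)=\nabla f(\bx_t)+\gamma_t\bx_t$ produces cross terms $\gamma_t\langle\nabla f(\bx_t),\bx_t\rangle$ of indefinite sign, so I would discharge them with Young's inequality at the cost of a negative contribution of order $\gamma_t^2\eta_t\|\bx_t\|^2$. This is precisely where the second inequality of Lemma~\ref{lem:tech_main}, $\gamma_{t-1}-\gamma_t-\gamma_t^2\eta_t\ge 0$, earns its keep: it lets the negative contribution be absorbed by the positive telescoping regularizer, so all $\|\bx_t\|^2$ terms drop out. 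The first inequality of Lemma~\ref{lem:tech_main} then guarantees that, after discounting the $\rho\|\nabla f(\bx_t)\|^2$ coming from the strong growth bound, the net coefficient of $\|\nabla f(\bx_t)\|^2$ is at least $\eta_t/8$.

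Summing over $t$ yields $\sum_{t=1}^T\tfrac{\eta_t}{8}\E[\|\nabla f(\bx_t)\|^2]\le\Delta+\tfrac{\sigma^2}{2}\sum_{t=1}^T\eta_t^2 L_t$, after which the remaining work is routine: lower-bound $\eta_t\ge\eta_T$ on the left to extract the factor $T/(8\eta_T)$, and evaluate the sum on the right using $\eta_t=\Theta(1/\sqrt{t})$ together with $\gamma_t=\Theta(1/\sqrt{t})$, so that the dominant contribution is of order $\sum_t 1/t$ producing the $\log(1+\alpha\sqrt{T})$ factor. Substituting $1/\eta_T=L(1+\rho)(1+\rho+\alpha\sqrt{T})$ and dividing by $T$ recovers the stated bound.
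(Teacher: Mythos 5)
Your proposal follows exactly the paper's argument: invoke Proposition~\ref{prop:DA-reformulation}, apply the descent lemma to the $L_t$-smooth surrogate $f_t$, telescope via $f_t(\bx_{t+1})-f_{t-1}(\bx_t)$, absorb the cross-term cost $\gamma_t^2\eta_t\|\bx_t\|^2$ with the positive regularizer drop $\tfrac{\gamma_{t-1}-\gamma_t}{2}\|\bx_t\|^2$ using Lemma~\ref{lem:tech_main}, and evaluate the resulting sums. (Your displayed descent inequality has a stray $\bigl(1-\tfrac{\eta_t L_t}{2}\bigr)$ factor on the inner-product term --- the correct form is $-\eta_t\langle\nabla f_t(\bx_t),\nabla f_t(\bx_t)+\bxi_t\rangle+\tfrac{\eta_t^2 L_t}{2}\|\nabla f_t(\bx_t)+\bxi_t\|^2$ --- but this is a transcription slip and does not affect the chain of reasoning you describe.)
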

\begin{proof}
     We have
    \begin{align*}
        \bx_{t+1} &= -\eta_t (\btheta_{t-1}+\bg_t)\\
        &= -\eta_{t-1} \btheta_{t-1} \frac{\eta_{t}}{\eta_{t-1}} -\eta_t \bg_t\\
        &= \bx_t \left(1 - \left(1-\frac{\eta_{t}}{\eta_{t-1}}\right)\right) - \eta_t \bg_t\\
        &= \bx_t -\eta_t \left( \bg_t + \left(\frac{1}{\eta_{t-1}}-\frac{1}{\eta_t}\right) \bx_t\right)~.
    \end{align*}
    Let $\gamma_t := \frac{1}{\eta_{t}}-\frac{1}{\eta_{t-1}}$, and $f_t(\bx) = f(\bx)+ \frac{\gamma_t}{2}\norm{\bx}^2$, recall that 
    \[
    \nabla f_t(\bx) = \nabla f(\bx)+\gamma_t \bx~.
    \]
    Therefore, the iterates of SDA using the steps sizes $\eta_t$ are the same as the iterates of SGD with steps sizes $\eta_t$ on the sequence of functions $(f_t)_{t\ge 1}$ with stochastic gradients $\widehat{\nabla} f(\bx_t) := \bg_t + \gamma_t \bx_t$. 
    
    \noindent We consider $\eta_0 = \eta_1$, and take the convention $\gamma_{-1} =  0$. Define $L_t := L+\gamma_t$, since $f$ is $L$-smooth, $f_t$ is $L_t$ smooth, therefore
    \begin{align}
        f_t(\bx_{t+1}) -f_t(\bx_t) &\le \inn{\nabla f_t(\bx_t)}{\bx_{t+1}-\bx_{t}} + \frac{L_t}{2}\norm{\bx_{t+1}-\bx_t}^2\nonumber\\
        &= -\eta_t \inn{\nabla f_t(\bx_t)}{\widehat{\nabla}f_t(\bx_t)} + \frac{\eta_t^2L_t}{2}\norm{\widehat{\nabla}f(\bx_t)}^2\nonumber\\
        &= -\eta_t \norm{\nabla f_t(\bx_t)}^2 -\eta_t \inn{\nabla f_t(\bx_t)}{\xi_t} +\frac{\eta_t^2L_t}{2} \norm{\nabla f(\bx_t)}^2+\frac{\eta_t^2L_t}{2} \norm{\bxi_t}^2 \\
        &\qquad +\eta_t^2L_t\inn{\nabla f(\bx_t)}{\bxi_t}\nonumber\\
        &=-\eta_t \left(1-\frac{\eta_tL_t}{2} \right) \norm{\nabla f_t(\bx_t)}^2 -\eta_t (1-\eta_tL_t) \inn{\nabla f(\bx_t)}{\bxi_t} \\
        &\qquad+ \frac{\eta_t^2 L_t}{2} \norm{\bxi_t}^2~.\label{eq:b1}
    \end{align}
    Observe that
    \begin{align}
        f_t(\bx_{t+1})-f_t(\bx_t) &= f_t(\bx_{t+1}) -f_{t-1}(\bx_t)+f_{t-1}(\bx_t) - f_{t}(\bx_t)\nonumber\\
        &= f_t(\bx_{t+1})-f_{t-1}(\bx_t) + \frac{\gamma_{t-1}-\gamma_t}{2} \norm{\bx_t}^2\label{eq:b2}
    \end{align}
    Combining \eqref{eq:b1} and \eqref{eq:b2} and rearranging we obtain
    \begin{align*}
        &\eta_t\left(1-\frac{\eta_t L_t}{2}\right)\norm{\nabla f_t(\bx_t)}^2 + \frac{\gamma_{t-1}-\gamma_{t}}{2} \norm{\bx_t}^2 \\
        &\le f_{t-1}(\bx_t)-f_t(\bx_{t+1})  -\eta_t (1-\eta_tL_t)\inn{\nabla f(\bx_t)}{\bxi_t} + \frac{\eta_t^2L_t}{2} \norm{\bxi_t}^2~.
    \end{align*}
    Taking the expectation, summing over $t$ and using the bound on the noise, we obtain
    \begin{align*}
        &\sum_{t=1}^T \eta_t\left(1-\frac{\eta_t L_t}{2}\right)\E\left[\norm{\nabla f_t(\bx_t)}^2 \right] + \frac{\gamma_{t-1}-\gamma_{t}}{2} \E\left[\norm{\bx_t}^2 \right] \\
        &\le f(\mathbf{0})  - \E\left[ f_T(\bx_{T+1})\right]+ \sum_{t=3}^T \frac{\eta_t^2L_t}{2} \E\left[\E_{t-1}\left[\norm{\bxi_t}^2\right]\right] \nonumber\\
        &\le f(\mathbf{0}) - f^*+ \sum_{t=3}^T \frac{\eta_t^2L_t}{2} \left(\rho \E\left[\norm{\nabla f(\bx_{t})}^2 \right] +\sigma^2\right)~. 
    \end{align*}
    Therefore,
    \begin{align*}
        &\sum_{t=1}^T \eta_t\left(1-\frac{\eta_t L_t}{2}\right)\E\left[\norm{\nabla f_t(\bx_t)}^2 \right] +\sum_{t=1}^T \frac{\gamma_{t-1}-\gamma_{t}}{2} \E\left[\norm{\bx_t}^2 \right] -\sum_{t=1}^T \frac{\rho}{2} \eta_t^2 L_t~\E\left[\norm{\nabla f(\bx_t)}^2\right]\\
        &\le f(\mathbf{0})-f^* + \sum_{t=1}^{T} \sigma^2\frac{\eta_t^2L_t}{2}~.
    \end{align*}	
    Denote 
    \[
    A_t := \eta_t\left(1-\frac{\eta_t L_t}{2}\right)\norm{\nabla f_t(\bx_t)}^2 +\frac{\gamma_{t-1}-\gamma_{t}}{2} \norm{\bx_t}^2  -  \frac{\rho}{2} \eta_t^2 L_t~\norm{\nabla f(\bx_t)}^2~.
    \]
    Therefore, the last bound writes
    \begin{equation}\label{eq:b3}
        \sum_{t=1}^T \mathbb{E}\left[A_t\right] \le \E\left[f_1(\bx_2)-f^*\right] + \frac{\sigma^2}{2}\sum_{t=1}^{T} \eta_t^2L_t~.
    \end{equation}
    We consider stepsizes of the form
    \[
    \eta_t = \frac{1}{L(1+\rho)(1+\rho+\alpha\sqrt{t})}~,
    \]
    where $\alpha = \min \left\lbrace \frac{\sigma}{L(1+\rho)}, 1 \right\rbrace$.
     Recall that we have 
    \begin{equation*}
        \norm{\nabla f_t(\bx_t)}^2 = \norm{\nabla f(\bx_t) + \gamma_t \bx_t}^2 \ge \frac{1}{2} \norm{\nabla f(\bx_t)}^2 - \gamma_t^2 \norm{\bx_t}^2~.
    \end{equation*}
    Therefore, following Lemma~\ref{lem:tech}, which gives that $1-\frac{\eta_tL_t}{2} \ge 0$, we have for any $t \ge 2$
    \begin{align*}
        A_t &\ge \frac{1}{2} \eta_t\left(1-\frac{\eta_t L_t}{2}\right)\norm{\nabla f(\bx_t)}^2  - \frac{1}{2}\eta_t \left(1-\frac{\eta_t L_t}{2}\right) \gamma_t^2 \norm{\bx_t}^2 + \frac{\gamma_{t-1}-\gamma_{t}}{2} \norm{\bx_t}^2 \\
        &\qquad - \frac{\rho}{2} \eta_t^2 L_t~\norm{\nabla f(\bx_t)}^2\\
        &= \left( \frac{1}{2} \eta_t -\frac{1}{4} \eta_t^2 L_t - \frac{1}{2}\rho \eta_t^2L_t\right) \norm{\nabla f(\bx_t)}^2 + \left(\frac{\gamma_{t-1}-\gamma_t}{2} -\frac{1}{2}\gamma_t^2\eta_t + \frac{1}{4}\gamma_t^2\eta_t^2 L_t\right) \norm{\bx_t}^2\\
        &\ge \frac{\eta_t}{8}  \norm{\nabla f(\bx_t)}^2~.  
    \end{align*}
    The last line follows from the lower bounds of the factors of $\norm{\nabla f(\bx_t)}^2$ and $\norm{\bx_t}^2$ provided in Lemma~\ref{lem:tech}. We conclude using \eqref{eq:b3} then the second bound of Lemma~\ref{lem:tech} that 
    \begin{align*}
        \sum_{t=2}^T \frac{\eta_t}{8}\mathbb{E}\left[\norm{\nabla f(\bx_t)}^2\right] &\le f(\mathbf{0})-f^* + \frac{\sigma^2}{2} \sum_{t=1}^T \eta_t^2L_t\\
        &\le f(\mathbf{0})-f^* + \frac{6\sigma^2}{10L(1+\rho)}  \sum_{t=1}^T \frac{1}{(1+\rho+\alpha \sqrt{t})^2}\\
        &\le f(\mathbf{0})-f^* + \frac{3\sigma^2}{5L(1+\rho)} \left(1+ \frac{2}{\alpha^2} \log \left(\frac{1+\rho+\alpha \sqrt{T}}{1+\rho+\alpha} \right)\right)\\
        &\le f(\mathbf{0})-f^* + \frac{3 \sigma^2 }{5L(1+\rho)} \left(1+\frac{2}{\alpha^2} \log(1+\alpha \sqrt{T})\right)~.
    \end{align*}
    Therefore,
    \begin{align*}
        \sum_{t=1}^T \mathbb{E}\left[\norm{\nabla f(\bx_t)}^2\right] &\le \norm{\nabla f(\mathbf{0})}^2+\frac{8}{\eta_T} (f(\mathbf{0})-f^*) + \frac{5 \sigma^2}{\eta_T L(1+\rho)} \left(1+\frac{2}{\alpha^2} \log(1+\alpha \sqrt{T})\right)\nonumber\\
        &\le \left(10 (f(\mathbf{0})-f^*) + \frac{4 \sigma^2 \log(1+\alpha \sqrt{T})}{ L(1+\rho)\alpha^2} \right) \frac{1}{\eta_T}~,
    \end{align*}
    where we used the descent lemma in the last line ($\norm{\nabla f(\mathbf{0})}^2 \le 2L(f(\mathbf{0})-f^*)$)~.
    We have using $\frac{x^2}{\min\{x^2, 1\}} \le x^2+1$
    \begin{align*}
    	\frac{4 \sigma^2 \log(1+\alpha \sqrt{T})}{ L(1+\rho)\alpha^2} &=  4\frac{\sigma^2}{L^2(1+\rho)^2}\frac{  \log(1+\alpha \sqrt{T})}{ \alpha^2}L(1+\rho)\\
    	&\le 4L(1+\rho) \left(\frac{\sigma^2}{L^2(1+\rho)^2} +1\right) \log(1+\alpha\sqrt{T})~. 
    \end{align*}
    The conclusion follows from the expressions of $\alpha$ and $\eta_T$
\end{proof}
    
\subsection{Missing Proofs in Section \ref{sec:high-prob}}

\begin{proof}[Proof of Lemma \ref{lem:da-basic-inequality}]

We start from the one-step descent of $f_{t}(\bx_{t})$. Smoothness of $f_{t}(\cdot)$ implies that
\begin{align*}
    f_{t}(\bx_{t+1}) - f_{t}(\bx_{t}) &\leq \inn{\nabla f_{t}(\bx_{t})}{\bx_{t+1} - \bx_{t}} + \frac{L_{t}}{2} \norm{\bx_{t+1} - \bx_{t}}^{2} \\
    &= -\eta_{t} \inn{\nabla f_{t}(\bx_{t})}{\hn_t(\bx_{t})} + \frac{\eta_t^{2} L_{t}}{2} \norm{\hn_t(\bx_{t})}^{2} \\
    &= -\eta_{t} \inn{\nabla f_{t}(\bx_{t})}{\nabla f_{t}(\bx_{t}) + \bxi_{t}} + \frac{\eta_t^{2} L_{t}}{2} \norm{\nabla f_{t}(\bx_{t}) + \bxi_{t}}^{2} \\
    &= -\eta_{t} \left(1 - \frac{\eta_t L_{t}}{2}\right) \norm{\nabla f_{t}(\bx_{t})}^{2} + \eta_{t} \left(\eta_t L_{t} - 1\right)\inn{\nabla f_{t}(\bx_{t})}{\bxi_{t}} + \frac{\eta_t^{2} L_{t}}{2} \norm{\bxi_{t}}^{2}.
\end{align*}
We obtain the inequality (\ref{eq:da-basic-inequality}) by rearranging the terms.
\end{proof}

\begin{proof}[Proof of Theorem \ref{thm:da-moment-inequality}]
We prove by induction. The base case $t = T + 1$ trivially holds. Consider
$1 \leq t \leq T$, we have
\begin{align*}
    \E\left[\exp\left(S_{t}\right) \mid \F_{t}\right] & = \E\left[\E\left[\exp\left(Z_{t} + S_{t+1}\right) \mid \F_{t+1}\right] \mid \F_{t}\right] \\
    &= \E\left[\exp\left(Z_{t}\right) \E\left[\exp\left(S_{t+1}\right) \mid \F_{t+1}\right] \mid \F_{k}\right].
\end{align*}
From the induction hypothesis we have 
\[
    \E\left[\exp\left(S_{t+1}\right) \mid \F_{t+1}\right] \leq \exp\left(3\sigma^{2}\sum_{i=t+1}^{T}\frac{w_{i}\eta_{i}^{2}L_{i}}{2}\right),
\]
hence 
\[
    \E\left[\exp\left(S_{t}\right) \mid \F_{t}\right] \leq \exp\left(3\sigma^{2} \sum_{i=t+1}^{T} \frac{w_{i} \eta_{i}^{2} L_{i}}{2}\right) \E\left[\exp\left(Z_{t}\right) \mid \F_{t}\right].
\]
We have then
\begin{align*}
    &\E\left[\exp\left(Z_{t}\right) \mid \F_{t}\right] \\
    &= \E\left[\exp\left(w_{t} \left(\eta_{t} \left(1 - \frac{\eta_{t} L_{t}}{2}\right) \norm{\nabla f_{t}(\bx_{t})}^{2} + \Delta_{t+1} f_{t} - \Delta_{t} f_{t}\right) - v_{t} \norm{\nabla f(\bx_{T})}^{2}\right) \mid \F_{t}\right] \\
    &\leq \E\left[\exp\left(w_{t} \left(\eta_{t} (\eta_{t} L_{t} - 1) \inn{\nabla f_{t}(\bx_{t})}{\bxi_{t}} + \frac{\eta_{t}^{2} L_{t}}{2} \norm{\bxi_{t}}^{2}\right) - v_{t} \norm{\nabla f_{t}(\bx_{t})}^{2}\right) \mid \F_{t}\right] \\
    &= \exp\left(-v_{t} \norm{\nabla f_{t}(\bx_{t})}^{2}\right) \E\left[\exp\left(w_{t}\left(\eta_{t} (\eta_{t} L_{t}-1) \inn{\nabla f_{t}(\bx_{t})}{\bxi_{t}} + \frac{\eta_{t}^{2} L_{t}}{2} \norm{\bxi_{t}}^{2}\right)\right) \mid \F_{t}\right] \\
    &\leq \exp\left(-v_{t} \norm{\nabla f_{t}(\bx_{t})}^{2}\right) \exp\left(3 \sigma^{2} \left(w_{t}^{2} \eta_{t}^{2} (\eta_{t} L_{t}-1)^{2} \norm{\nabla f_{t}(\bx_{t})}^{2} + \frac{w_{t} \eta_{t}^{2} L_{t}}{2}\right)\right) \\
    &= \exp\left(3 \sigma^{2} \frac{w_{t} \eta_{t}^{2} L_{t}}{2}\right)
\end{align*}
where the second line is due to (\ref{eq:da-basic-inequality}) in Lemma \ref{lem:da-basic-inequality} and the second to last line is due to the helper Lemma 2.2 in \cite{liu2023high}. Therefore,
\[
    \E\left[\exp\left(S_{t}\right) \mid \F_{t}\right] \leq \exp\left(3 \sigma^{2} \sum_{i=t}^{T} \frac{w_{i} \eta_{i}^{2} L_{i}}{2}\right)
\]
which is what we want to show.
\end{proof}

\begin{proof}[Proof of Corollary \ref{cor:da-general-guarantee}]
In Lemma \ref{thm:da-moment-inequality}, let $t = 1$ we obtain 
\[
    \E\left[\exp\left(S_{1}\right)\right] \leq \exp\left(3 \sigma^{2} \sum_{t=1}^{T} \frac{w_{t} \eta_{t}^{2} L_{t}}{2}\right).
\]
Hence, by Markov's inequality, we have 
\[
    \Pr\left[S_{1} \geq \left(3 \sigma^{2} \sum_{t=1}^{T} \frac{w_{t} \eta_{t}^{2} L_{t}}{2}\right) + \log\frac{1}{\delta}\right] \leq \delta.
\]
In other words, with probability at least $1 - \delta$ (once condition \eqref{eq:da-scale-condition} is satisfied),
\begin{align*}
    &\sum_{t=1}^{T} \left(\left(w_{t} \eta_{t} \left(1 - \frac{\eta_{t} L_{t}}{2}\right) - v_{t}\right) \norm{\nabla f_t(\bx_{t})}^{2} + w_{t} \left(\Delta_{t+1} f_{t} - \Delta_{t} f_{t}\right)\right) \\
    &\leq 3 \sigma^{2} \sum_{t=1}^{T} \frac{w_{t} \eta_{t}^{2} L_{t}}{2} + \log\frac{1}{\delta}.
\end{align*}
\end{proof}

\begin{proof}[Proof of Lemma \ref{lem:da-central-bound}]
Consider $w_{t} = w = 1 / (6 \sigma^{2} \eta_{1})$. Clearly, our choice of $w_{t}$ satisfies condition \eqref{eq:da-scale-condition} since 
\[
    w_{t} \eta_{t}^{2} L_t = \frac{\eta_{t}}{\eta_{1}} \cdot \left(\eta_{t} L_{t}\right) \cdot \frac{1}{6 \sigma^{2}} \leq \frac{1}{2 \sigma^{2}},
\]
then it follows that
\begin{align*}
    \text{LHS of \eqref{eq:da-general-bound}} &\geq \underbrace{\sum_{t=1}^{T} \left(w \eta_{t} \left(1 - \frac{\eta_{t} L_{t}}{2}\right) - 3 \sigma^{2} w^{2} \eta_{t}^{2} (\eta_{t} L_{t} - 1)^{2}\right) \norm{\nabla f_{t}(\bx_{t})}^{2}}_{A} \\
    &\quad + \underbrace{\sum_{t=1}^{T} w \left(\Delta_{t+1} f_{t} - \Delta_{t} f_{t}\right)}_B,
\end{align*}
where
\begin{align*}
    A &= \sum_{t=1}^{T} w \eta_{t} \left(1 - \frac{\eta_{t} L_{t}}{2} - 3 \sigma^{2} w \eta_{t} \left(1 - \eta_{t} L_{t}\right)^{2}\right) \norm{\nabla f_t(\bx_{t})}^{2} \\
    &\geq \sum_{t=1}^{T} w \eta_{t} \left(1 - \frac{\eta_{t} L_{t}}{2} - 3 \sigma^{2} w \eta_{1} \left(1 - \eta_{t} L_{t}\right)^{2}\right) \norm{\nabla f_t(\bx_{t})}^{2} \\
    &= \sum_{t=1}^{T} w \eta_{t} \left(1 - \frac{\eta_{t} L_{t}}{2} - \frac{1}{2} \left(1 - \eta_{t} L_{t}\right)^{2}\right) \norm{\nabla f_t(\bx_{t})}^{2} \\
    &\geq \sum_{t=1}^{T} \frac{w \eta_{t}}{2} \norm{\nabla f_t(\bx_{t})}^{2} \\
    &= \frac{w}{2} \sum_{t=1}^{T} \eta_t \norm{\nabla f_t(\bx_{t})}^{2}.
\end{align*}
The second inequality is due to 
\[
    1 - \frac{\eta L_{t}}{2\sqrt{t}} - \frac{1}{2}\left(1 - \eta_{t} L_{t}\right)^{2} \geq \frac{1}{2}
\]
when $0 \leq \eta_{t} L_{t} \leq 1$. We also have
\begin{align*}
    B &= w \sum_{t=1}^{T} \left(f_{t}(\bx_{t+1}) - f_{t}(\bx_{t})\right) \\
    &= w \sum_{t=1}^{T} \left(f_{t}(\bx_{t+1}) - f_{t-1}(\bx_{t})
    + f_{t-1}(\bx_{t}) - f_t(\bx_{t})\right) \\
    &= w \sum_{t=1}^{T} \left(f_{t}(\bx_{t+1}) - f_{t-1}(\bx_{t})\right) + w \sum_{t=1}^{T} \left(f_{t-1}(\bx_{t}) - f_t(\bx_{t})\right) \\
    &= w \left(\Delta_{T+1} f_{T+1} - \Delta_{1} f_{1}\right) + w \sum_{t=1}^{T} \left(f_{t-1}(\bx_{t}) - f_t(\bx_{t})\right).
\end{align*}
Therefore, with probability at least $1 - \delta$, we have
\begin{align*}
    &\sum_{t=1}^{T} \eta_t \norm{\nabla f_t(\bx_{t})}^{2} + 2 \left(\Delta_{T+1} f_{T+1} - \Delta_{1} f_{1}\right) + \sum_{t=1}^{T} \left(f_{t-1}(\bx_{t}) - f_t(\bx_{t})\right) \\
    &\leq 3 \sigma^{2} \sum_{t=1}^{T} \eta_{t}^{2} L_{t} + \frac{2}{w} \log \frac{1}{\delta} \\
    &= 3 \sigma^{2} \sum_{t=1}^{T} \eta_{t}^{2} L_{t} + 12 \sigma^{2} \eta_{1} \log \frac{1}{\delta}.
\end{align*}
\end{proof}

\begin{proof}[Proof of Theorem \ref{thm:da-convergence}]
We first take care of the LHS of \eqref{eq:da-central-bound}. Note that by definition,
\[
    \nabla f_{t}(\bx_{t}) = \nabla f(\bx_{t}) + \gamma_{t} \bx_{t}.
\]
Invoking Lemma \ref{lem:norm-decomposition} with $u = \nabla f(\bx_{t})$, $v = \gamma_{t}\bx_{t}$ and $\lambda = 2 / 3$ gives
\begin{equation} \label{eq:specific-norm-decomposition}
    \norm{\nabla f_{t}(\bx_{t})}^{2} \geq \frac{1}{3} \norm{\nabla f(\bx_{t})}^{2} - \frac{1}{2} \gamma_{t}^{2} \norm{\bx_{t}}^{2}.
\end{equation}
Also recall that
\begin{align}
    f_{t-1}(\bx_{t}) - f_t(\bx_{t}) &= f(\bx_{t}) + \frac{\gamma_{t-1}}{2} \norm{\bx_{t}}^{2} - \left(f(\bx_{t}) + \frac{\gamma_{t}}{2} \norm{\bx_{t}}^{2}\right) \nonumber \\
    &= \frac{\gamma_{t-1} - \gamma_{t}}{2} \norm{\bx_{t}}^{2} \label{eq:ft-diff}.
\end{align}
Obviously our choice of ${\eta_t}$ is non-increasing and
\begin{align*}
    \eta_{t} L_{t} &= \eta_{t} \left(L + \frac{1}{\eta_{t}} - \frac{1}{\eta_{t - 1}} \right) \\
    &= \eta_{t} L + 1 - \frac{\eta_{t}}{\eta_{t - 1}} \\
    &= \frac{L}{L + \sigma \sqrt{t}} - \frac{L + \sigma \sqrt{t - 1}}{L + \sigma \sqrt{t}} + 1 \\
    &\leq 1.
\end{align*}
Now, plugging the above couple of inequalities \eqref{eq:specific-norm-decomposition} and \eqref{eq:ft-diff} in, we obtain
\begin{align*}
    \text{LHS of \eqref{eq:da-central-bound}} &\geq \sum_{t=1}^{T} \eta_t \left(\frac{1}{3} \norm{\nabla f(\bx_{t})}^{2} - \frac{1}{2} \gamma_{t}^{2} \norm{\bx_{t}}^{2}\right) + 2 \left(\Delta_{T+1} f_{T+1} - \Delta_{1} f_{1}\right) \\
    &\quad + \sum_{t=1}^{T} \frac{\gamma_{t-1} - \gamma_{t}}{2} \norm{\bx_{t}}^{2} \\
    &= \frac{1}{3} \sum_{t=1}^{T} \eta_t \norm{\nabla f(\bx_{t})}^{2} + 2 \left(\Delta_{T+1} f_{T+1} - \Delta_{1} f_{1}\right) + \frac{1}{2} \sum_{t=1}^{T} \left(\gamma_{t-1} - \gamma_{t} - \gamma_{t}^{2} \eta_{t}\right) \norm{\bx_{t}}^{2} \\
    &\stackrel{(*)}{\geq} \frac{1}{3} \sum_{t=1}^{T} \eta_t \norm{\nabla f(\bx_{t})}^{2} + 2 \left(\Delta_{T+1} f_{T+1} - \Delta_{1} f_{1}\right) \\
    &\geq \frac{1}{3} \sum_{t=1}^{T} \eta_{T} \norm{\nabla f(\bx_{t})}^{2} + 2 \left(\Delta_{T+1} f_{T+1} - 2 \Delta_{1} f_{1}\right) \\
    &\geq \frac{\eta_{T}}{3} \sum_{t=1}^{T} \norm{\nabla f(\bx_{t})}^{2} - 2 \Delta_{1} f_{1}
\end{align*}
where $(*)$ comes from Lemma \ref{lem:offseting}, and the last inequality follows from the fact that $f_{T+1}(\bx_{T+1}) \geq f^{*}$. Besides, by choosing $\eta_{t} = 1 / (L + \sigma t)$,
\begin{align*}
    \text{RHS of \eqref{eq:da-central-bound}} &= 3 \sigma^{2} \sum_{t=1}^{T} \left(\frac{1}{L + \sigma \sqrt{t}}\right)^{2} \left(L + \sigma \sqrt{t} - \left(L + \sigma \sqrt{t - 1}\right)\right) + 12 \sigma^{2} \eta_{1} \log \frac{1}{\delta} \\
    &= 3 \sigma^{2} \sum_{t=1}^{T} \left(\frac{1}{L + \sigma \sqrt{t}}\right)^{2} \left(L + \sigma \left(\sqrt{t} - \sqrt{t-1}\right)\right) + 12 \sigma^{2} \eta_{1} \log \frac{1}{\delta} \\
    &\leq 3 \sigma^{2} \sum_{t=1}^{T} \left(\frac{1}{L + \sigma \sqrt{t}}\right)^{2} \left(L + \sigma\right) + 12 \sigma^{2} \eta_{1} \log \frac{1}{\delta} \\
    &\leq 3 \sigma^{2} \cdot \frac{2}{\sigma^{2}} \left(\log\left(1 + \frac{\sigma \sqrt{T}}{L}\right) + \frac{L}{L + \sigma \sqrt{T}} - 1\right) \left(L + \sigma\right) + 12 \sigma^{2} \eta_{1} \log \frac{1}{\delta} \\
    &\leq 6 \left(L + \sigma\right) \log\left(1 + \frac{\sigma \sqrt{T}}{L}\right) + 12 \sigma^{2} \eta_{1} \log \frac{1}{\delta}.
\end{align*}
Now, dividing both sides by $\eta_{T} / 3$ yields
\begin{align*}
    &\sum_{t=1}^{T} \norm{\nabla f(\bx_{t})}^{2} \\
    &\leq \frac{3}{\eta_{T}} \left(2 \left(f(\bzero) - f^*\right) + 6 \left(L + \sigma\right) \log\left(1 + \frac{\sigma \sqrt{T}}{L}\right) + 12 \sigma^{2} \eta_{1} \log \frac{1}{\delta}\right) \\
    &= \left(6 \left(f(\bzero) - f^*\right) + 18 \left(L + \sigma\right) \log\left(1 + \frac{\sigma \sqrt{T}}{L}\right) + \frac{36 \sigma^{2}}{L + \sigma} \log \frac{1}{\delta}\right) \left(L + \sigma \sqrt{T}\right).
\end{align*}
Factoring out an order of $T$ gives the desired result.
\end{proof}

\subsection{Proof of Theorem~\ref{thm:ada}}\label{sec:pr_ada}
\begin{reptheorem}[Restatement of Theorem~\ref{thm:ada}]
    Let $f$ be $L$-smooth, suppose the gradient noise $\bxi_t =  \bg_t - \nabla f(\bx_t)$ satisfies the $(\rho, \sigma)$ strong growth condition. Consider SDA iterates using step sizes
    \[
        \eta_t = \frac{\eta}{\sqrt{\gamma + \sum_{i=1}^t \norm{\bg_i}^2}}~.
    \] 
    Let $\bx^*$ be a global minimizer and
    \[
        B_T := \max_{t\le T} \ \left\{ \norm{\bx_t - \bx^*}\norm{\bx_{t}} + \frac{1}{2} \norm{\bx_t - \bx^*}^2 +\norm{\bx_{t}}^2\right\}~.
    \]
    Then, we have
    \begin{align*}
        \sum_{t=1}^T \mathbb{E}\left[ \norm{\nabla f(\bx_t)}^2\right] 
        &\le 2\sqrt{2}L\sigma \left(\mathbb{E}\left[\left(\frac{B_T}{\eta}+2\eta\right)^2 \right]\right)^{1/2} \sqrt{T} \\
        &\quad + 2 \sqrt{2}L^2\left(\sqrt{2}+\rho\right) \mathbb{E}\left[\left(\frac{B_T}{\eta}+2\eta\right)^2 \right] + \frac{\gamma}{2}+2\frac{f(\mathbf{0})-f^*}{\eta}~.
    \end{align*}
\end{reptheorem}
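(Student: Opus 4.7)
The plan is to follow the chain sketched in the statement, fleshing out each link. First, I would rewrite the SDA recursion in the SGD-like form: using $\bx_t = -\eta_{t-1}\sum_{i=1}^{t-1}\bg_i$ one obtains $\bx_{t+1} = \bx_t - \eta_t \bg'_t$ with $\bg'_t := \bg_t + \kappa_t\bx_t$ and $\kappa_t := 1/\eta_t - 1/\eta_{t-1} \ge 0$. Apply $L$-smoothness of $f$, divide by $\eta_t$, and sum; the subtle choice is that I divide before summing so that Abel's summation on $\Delta_t := f(\bx_t) - f^*$ produces the telescope $\Delta_{T+1}/\eta_T - \Delta_1/\eta_1 - \sum_{t\ge 2}\kappa_t\Delta_t$. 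Dropping the nonnegative $\Delta_{T+1}/\eta_T$ and expanding $\langle \nabla f(\bx_t), \bg'_t\rangle = \langle\nabla f(\bx_t),\bg_t\rangle + \kappa_t\langle\nabla f(\bx_t),\bx_t\rangle$ yields the key identity
\[
\sum_{t=1}^{T}\langle\nabla f(\bx_t),\bg_t\rangle \le \frac{\Delta_1}{\eta_1} + \sum_{t=1}^T \kappa_t\bigl(\Delta_t - \langle\nabla f(\bx_t),\bx_t\rangle\bigr) + \frac{L}{2}\sum_{t=1}^{T}\eta_t\|\bg'_t\|^2.
\]

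Second, I would dispose of the $\|\bg'_t\|^2$ sum. The elementary bound $\|\bg'_t\|^2 \le 2\|\bg_t\|^2 + 2\kappa_t^2\|\bx_t\|^2$ combined with $\eta_t\kappa_t = 1-\eta_t/\eta_{t-1} \le 1$ (hence $\eta_t\kappa_t^2 \le \kappa_t$) gives $\frac{L}{2}\sum_t \eta_t\|\bg'_t\|^2 \le L\eta\sum_t\|\bg_t\|^2/\sqrt{S_t} + L\sum_t\kappa_t\|\bx_t\|^2$, where $S_t := \gamma + \sum_{i\le t}\|\bg_i\|^2$. The standard AdaGrad telescoping $\sum_t\|\bg_t\|^2/\sqrt{S_t}\le 2\sqrt{S_T}$ handles the first piece, giving $2L\eta\sqrt{S_T}$, while the second piece merges into the $\kappa_t$-sum with an extra $L\|\bx_t\|^2$ contribution. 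Now I would use $L$-smoothness and the existence of a minimizer $\bx^*$ to obtain the three estimates $\Delta_t \le \tfrac{L}{2}\|\bx_t-\bx^*\|^2$, $|\langle\nabla f(\bx_t),\bx_t\rangle|\le L\|\bx_t-\bx^*\|\|\bx_t\|$, and of course $L\|\bx_t\|^2$ stands as is; summed with $\kappa_t$ each is dominated by $L\cdot\kappa_t B_T$, and $\sum_t \kappa_t$ telescopes to $1/\eta_T - 1/\eta_0 \le \sqrt{S_T}/\eta$. Setting $C := L B_T/\eta + 2L\eta$, I reach
\[
\sum_{t=1}^T \langle\nabla f(\bx_t),\bg_t\rangle \le \tfrac{\Delta_1}{\eta_1} + C\sqrt{S_T}.
\]

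Third, I decouple $\sqrt{S_T}$ from the gradients. Writing $\bg_t = \nabla f(\bx_t) + \bxi_t$, the bound $\|\bg_t\|^2 \le 2\|\nabla f(\bx_t)\|^2 + 2\|\bxi_t\|^2$ and subadditivity of the square root give $\sqrt{S_T} \le \sqrt{\gamma} + \sqrt{2}\sqrt{G^2} + \sqrt{2}\sqrt{N^2}$ with $G^2 := \sum_t\|\nabla f(\bx_t)\|^2$ and $N^2 := \sum_t\|\bxi_t\|^2$. I apply weighted AM-GM to absorb $C\sqrt{G^2}$ as $\tfrac{1}{4}G^2 + 2C^2$ and $C\sqrt{\gamma} \le C^2 + \gamma/4$. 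Expanding $\langle\nabla f(\bx_t),\bg_t\rangle = \|\nabla f(\bx_t)\|^2 + \langle\nabla f(\bx_t),\bxi_t\rangle$ and moving $G^2/4$ to the left isolates $\tfrac{3}{4}G^2$ on the LHS minus a noise martingale term; taking expectations, the latter vanishes by unbiasedness (Assumption~\ref{ass:unbiased-estimator}), leaving a $\sqrt{2}\,C\sqrt{N^2}$ residual that I control by Cauchy--Schwarz: $\mathbb{E}[C\sqrt{N^2}] \le \sqrt{\mathbb{E}[C^2]}\sqrt{\mathbb{E}[N^2]}$. The strong-growth condition then yields $\mathbb{E}[N^2]\le \rho\,\mathbb{E}[G^2] + T\sigma^2$, so $\sqrt{\mathbb{E}[N^2]} \le \sqrt{\rho\,\mathbb{E}[G^2]} + \sigma\sqrt{T}$; a final AM-GM step absorbs the $G^2$-portion back into the LHS, after which solving for $\mathbb{E}[G^2]$ gives exactly the claimed inequality.

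The main obstacle, conceptually, is the extra $\kappa_t\bx_t$ term that differentiates SDA from SGD: it forces several cross-terms involving $\bx_t$ that must all be simultaneously controlled by the single scalar $B_T$, and it is what prevents an iterate-independent bound (as remarked after the theorem). Technically, the trickiest bookkeeping is steering the two applications of AM-GM so that the coefficient of $\mathbb{E}[G^2]$ on the RHS stays strictly below $1$ even after incorporating the $\sqrt{\rho}\sqrt{\mathbb{E}[G^2]}$ contribution from the noise; together with the randomness of $C$ through $B_T$ (which forbids Cauchy--Schwarz until the very last step), these are the only delicate points of the argument.
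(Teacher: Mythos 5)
Your proposal reproduces the paper's proof essentially line by line: the same SGD-like rewriting $\bx_{t+1}=\bx_t-\eta_t\bg'_t$ with $\bg'_t=\bg_t+\gamma_t\bx_t$, the same divide-then-sum application of smoothness with Abel summation, the same bound $\norm{\bg'_t}^2\le2\norm{\bg_t}^2+2\gamma_t^2\norm{\bx_t}^2$ combined with $\eta_t\gamma_t\le1$, the same three estimates ($\Delta_t\le\tfrac{L}{2}\norm{\bx_t-\bx^*}^2$, $\norm{\nabla f(\bx_t)}\le L\norm{\bx_t-\bx^*}$, and $L\norm{\bx_t}^2$) collapsed into $B_T$, the same AdaGrad telescope $\sum_t\norm{\bg_t}^2/\sqrt{S_t}\le2\sqrt{S_T}$, the same decomposition of $\sqrt{S_T}$, and the same sequence of AM--GM / Cauchy--Schwarz / strong-growth / AM--GM steps culminating in a $\tfrac12\sum_t\E[\norm{\nabla f(\bx_t)}^2]$ on the left. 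The argument is correct and not a genuinely different route.
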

\begin{proof}
We consider the update rule given by
\[
\bx_{t+1}
=-\eta_t \sum_{i=1}^{t} \bg_i = -\eta_t \btheta_t,
\]
where $\btheta_t = \sum_{i=1}^{t} \bg_i$, and $\eta_t$ is a learning rate. Therefore
\begin{align*}
    \bx_{t+1}-\bx_t &= -\eta_{t} \btheta_t +\eta_{t-1} \btheta_{t-1}
    = (\eta_{t-1}-\eta_{t}) \btheta_{t-1} - \eta_{t} \bg_t
    = -\eta_{t} \left(\bg_t + \left(1 - \frac{\eta_{t-1}}{\eta_{t}}\right)\btheta_{t-1}\right)~.
\end{align*}
Denote $\bg'_t := \bg_t + \left(1 - \frac{\eta_{t-1}}{\eta_{t}}\right)\btheta_{t-1}$, the updates are $\bx_{t+1} = \bx_t - \eta_{t} \bg'_t$. 
We consider the learning rates given by
\begin{equation}\label{eq:eta_def}
    \eta_t = \frac{\eta}{\sqrt{\gamma + \sum_{i=1}^t \norm{\bg_i}^2} },
\end{equation}
We define $S_t := \gamma+\sum_{i=1}^{t} \norm{\bg_i}^2$. We note by $\bxi_t$ the noise
\[
\bxi_t :=  \bg_t - \nabla f(\bx_t)~.
\]
\noindent The updates of SDA are $\bx_{t+1} = \bx_t - \eta_t \bg'_t$. We have using smoothness
\begin{align*}
    f(\bx_{t+1})-f(\bx_t) 
    &\le \nabla f(\bx_t)^{\top} (\bx_{t+1}-\bx_{t}) + \frac{1}{2}L \norm{\bx_{t+1}-\bx_t}^2
    = -\eta_t \nabla f(\bx_t)^{\top} \bg'_{t} +\frac{L}{2} \eta_t^2 \norm{\bg'_t}^2~. 
\end{align*}
Let $\Delta_t := f(\bx_t)-f^*$. We are using the last bound
\[
    \nabla f(\bx_t)^{\top} \bg'_t \le \frac{1}{\eta_t}\left(\Delta_t - \Delta_{t+1} \right) + \frac{L}{2} \eta_t \norm{\bg'_t}^2~.
\]
Summing over $t$ and taking $\eta_0 = \eta$, we have
\begin{align*}
    \sum_{t=1}^{T} \nabla f(\bx_t)^{\top} \bg'_t &\le \sum_{t=1}^T \frac{1}{\eta_t} \left(\Delta_t - \Delta_{t+1}\right) + \frac{L}{2} \sum_{t=1}^T \eta_t \norm{\bg'_t}^2\\
    &\le \frac{\Delta_1}{\eta}-\frac{\Delta_{T+1}}{\eta_T} + \sum_{t=1}^{T} \left(\frac{1}{\eta_t} - \frac{1}{\eta_{t-1}}\right) \Delta_t + \frac{L}{2} \sum_{t=1}^{T} \eta_t \norm{\bg'_t}^2\\
    &\le \frac{\Delta_1}{\eta} + \sum_{t=1}^{T} \left(\frac{1}{\eta_t} - \frac{1}{\eta_{t-1}}\right) \Delta_t + \frac{L}{2} \sum_{t=1}^{T} \eta_t \norm{\bg'_t}^2~.
\end{align*}
Therefore, we obtain
\begin{align}
    &\sum_{t=1}^T \nabla f(\bx_t)^{\top}\bg_t \nonumber\\
    &\le \frac{\Delta_1}{\eta} + \sum_{t=1}^T \left( \frac{\eta_{t-1}}{\eta_t}-1\right)\nabla f(\bx_t)^{\top} \btheta_{t-1} + \sum_{t=1}^T  \left(\frac{1}{\eta_t} - \frac{1}{\eta_{t-1}}\right) \Delta_t + \frac{L}{2} \sum_{t=1}^{T} \eta_t \norm{\bg'_t}^2 \nonumber\\
    &\le \frac{\Delta_1}{\eta} - \sum_{t=1}^T \left(\frac{1}{\eta_t}-\frac{1}{\eta_{t-1}}\right) \nabla f(\bx_t)^{\top} \bx_{t} + \sum_{t=1}^T \left(\frac{1}{\eta_t}-\frac{1}{\eta_{t-1}}\right) \Delta_t\nonumber\\
    &\qquad  + L\sum_{t=1}^T \eta_t \norm{\bg_t}^2+ L \sum_{t=1}^T \eta_t \left(\frac{\eta_{t-1}}{\eta_t}-1\right)^2 \norm{\btheta_{t-1}}^2\nonumber\\
    &=  \frac{\Delta_1}{\eta} - \sum_{t=1}^T \left(\nabla f(\bx_t)^{\top} \bx_{t} - \Delta_t\right)\left(\frac{1}{\eta_{t}}-\frac{1}{\eta_{t-1}}\right) + L \sum_{t=1}^T \eta_t \norm{\bg_t}^2\nonumber\\
    &\qquad + L \sum_{t=1}^T \eta_t \left(\frac{1}{\eta_t}-\frac{1}{\eta_{t-1}}\right)^2 \norm{\bx_{t}}^2\nonumber\\
    &= \frac{\Delta_1}{\eta} - \sum_{t=1}^T \left(\nabla f(\bx_t)^{\top} \bx_{t} - \Delta_t\right)\left(\frac{1}{\eta_{t}}-\frac{1}{\eta_{t-1}}\right) + L \eta\sum_{t=1}^T \frac{\norm{\bg_t}^2}{\sqrt{S_t}}\nonumber\\
    &\qquad  + L \sum_{t=1}^T \left(1-\frac{\eta_t}{\eta_{t-1}}\right) \left(\frac{1}{\eta_{t}}-\frac{1}{\eta_{t-1}}\right) \norm{\bx_{t}}^2\nonumber\\
    &\le \frac{\Delta_1}{\eta} - \sum_{t=1}^T \left(\nabla f(\bx_t)^{\top} \bx_{t} - \Delta_t\right)\left(\frac{1}{\eta_t}-\frac{1}{\eta_{t-1}}\right) + L\eta \sum_{t=1}^T \frac{\norm{\bg_t}^2}{\sqrt{S_t}}\nonumber \\
    &\qquad +  L \sum_{t=1}^T  \left(\frac{1}{\eta_t}-\frac{1}{\eta_{t-1}}\right) \norm{\bx_{t}}^2\nonumber\\
    &\le \frac{\Delta_1}{\eta} - \sum_{t=1}^T \!\left(\nabla f(\bx_t)^{\top} \bx_{t} - \Delta_t - L\norm{\bx_{t}}^2\right) \!\left(\frac{1}{\eta_t}-\frac{1}{\eta_{t-1}}\right) \! + L\eta \sum_{t=1}^T \frac{\norm{\bg_t}^2}{\sqrt{S_t}}. \!\label{eq:boundn1}
\end{align}
We also have
\[
    \sum_{t=1}^T \frac{\norm{\bg_t}^2}{\sqrt{S_t}} 
    \le 2\sum_{t=1}^T \left(\sqrt{S_t}-\sqrt{S_{t-1}}\right)
    \le 2\sqrt{S_T}~.
\]
Let $\bx^*$ denote a stationary point, given that $f$ is $L$-smooth, we have
\begin{equation}
    \Delta_t = f(\bx_t)-f^*
    \le \nabla f(\bx^*)^{\top} (\bx_t-\bx^*) + \frac{L}{2} \norm{\bx_t-\bx^*}^2
    = \frac{L}{2} \norm{\bx_t-\bx^*}^2~.\label{eq:boundn2}
\end{equation}
Moreover,
\[
    \norm{\nabla f(\bx_t)} 
    = \norm{\nabla f(\bx_t)-\nabla f(\bx^*)}
    \le L \norm{\bx_t - \bx^*}~. 
\]
Using the last two bounds we have
\begin{align}
    \lvert \nabla f(\bx_t)^{\top} \bx_{t} - \Delta_t - L \norm{\bx_{t}}^2 \rvert 
    &\le \lvert \nabla f(\bx_t)^{\top} \bx_{t} \rvert + \lvert \Delta_t \rvert + L\norm{\bx_{t}}^2\nonumber\\
    &\le L \norm{\bx_t - \bx^*}\norm{\bx_{t}} + \frac{L}{2} \norm{\bx_t - \bx^*}^2 +L\norm{\bx_{t}}^2~.  \label{eq:boundn3}
\end{align}
Using bounds \eqref{eq:boundn2} and \eqref{eq:boundn3} in \eqref{eq:boundn1} and the definition of $B_T$, we have
\begin{align*}
    \sum_{t=1}^T \nabla f(\bx_t)^{\top}\bg_t &\le \frac{\Delta_1}{\eta} + LB_T \sum_{t=1}^T \left(\frac{1}{\eta_t} - \frac{1}{\eta_{t-1}}\right)+ 2L\eta\sqrt{S_T}\\
    &\le  \frac{\Delta_1}{\eta} + \frac{1}{\eta}LB_T \sqrt{S_T}+ 2L\eta\sqrt{S_T}\\
    &\le \frac{\Delta_1}{\eta} +\sqrt{2} \frac{L}{\eta}B_T \sqrt{\gamma + \sum_{t=1}^T \norm{\nabla f(\bx_t)}^2 + \sum_{t=1}^T \norm{\bxi_t}^2}\\
    &\qquad+ 2\sqrt{2}L\eta\sqrt{\gamma + \sum_{t=1}^T \norm{\nabla f(\bx_t)}^2 + \sum_{t=1}^T \norm{\bxi_t}^2}\\
    &\le \frac{\Delta_1}{\eta} +\sqrt{2} \left(\frac{L}{\eta}B_T+ 2L\eta\right) \sqrt{\gamma +\sum_{t=1}^T \norm{\nabla f(\bx_t)}^2}\\
    &\qquad+ \sqrt{2}\left(\frac{L}{\eta}B_T+ 2L\eta\right) \sqrt{\sum_{t=1}^T \norm{\bxi_t}^2}~.
\end{align*}
Taking the expectation both sides, we obtain
\begin{align}
    \sum_{t=1}^T \mathbb{E}\left[\norm{\nabla f(\bx_t)}^2 \right] &= \sum_{t=1}^T \mathbb{E}\left[ \nabla f(\bx_{t})^{\top} \bg_t\right]\nonumber\\
    &\le \frac{\Delta_1}{\eta} + \sqrt{2}~\mathbb{E}\left[\left(\frac{L}{\eta}B_T+ 2L\eta\right) \sqrt{\gamma+\sum_{t=1}^T \norm{\nabla f(\bx_t)}^2} \right] \nonumber\\
    & \qquad + \sqrt{2}~\mathbb{E}\left[ \left(\frac{L}{\eta}B_T+ 2L\eta\right) \sqrt{\sum_{t=1}^T \norm{\bxi_t}^2}\right]~. \label{eq:boundn6}
\end{align}
Recall that using the fact that $ax-\frac{1}{4}x^2 \le a^2$ for all $x\in \R$ we have
\begin{equation}\label{eq:boundn4}
    \sqrt{2}\left(\frac{L}{\eta}B_T+ 2L\eta\right) \sqrt{\gamma+\sum_{t=1}^T \norm{\nabla f(\bx_t)}^2} - \frac{1}{4} \sum_{t=1}^T \norm{\nabla f(\bx_t)}^2-\frac{\gamma}{4} \le  2\left( \frac{L}{\eta}B_T + 2L\eta\right)^2~.
\end{equation}
Moreover, we have
\begin{align*}
    \mathbb{E}&\left[ \left(\frac{L}{\eta}B_T+ 2L\eta \right)\sqrt{\sum_{t=1}^T \norm{\bxi_t}^2} \right] \\
    &\le \left(\mathbb{E}\left[\left(\frac{L}{\eta}B_T+ 2L\eta \right)^2 \right]\right)^{1/2} \left( \mathbb{E}\left[\sum_{t=1}^T \norm{\bxi_t}^2 \right]\right)^{1/2}\\
    &\le \left(\mathbb{E}\left[\left(\frac{L}{\eta}B_T+ 2L\eta \right)^2 \right]\right)^{1/2} \left( \mathbb{E}\left[\sum_{t=1}^T \left(\rho \norm{\nabla f(\bx_t)}^2 +\sigma^2\right) \right]\right)^{1/2}~.
\end{align*}
To ease the notation, let $\bar{B}_T :=  \left(\mathbb{E}\left[\left(\frac{1}{\eta}B_T+ 2\eta \right)^2 \right]\right)^{1/2}$. So, we have
\begin{align*}
    \mathbb{E}\left[ \left(\frac{L}{\eta}B_T+ 2L\eta \right)\sqrt{\sum_{t=1}^T \norm{\bxi_t}^2} \right] &\le L\bar{B}_T \sqrt{\sigma^2 T + \rho\sum_{t=1}^T \mathbb{E}\left[\norm{\nabla f(\bx_t)}^2\right] }\\
    &\le L\sigma \bar{B}_T \sqrt{T} + L\bar{B}_T \sqrt{\rho} \sqrt{\sum_{t=1}^T \E\left[\norm{\nabla f(\bx_t)}^2\right]}~. 
\end{align*}
Therefore, we have 
\begin{align}
    \mathbb{E}&\left[ \left(\frac{L}{\eta}B_T+ 2L\eta \right)\sqrt{\sum_{t=1}^T \norm{\bxi_t}^2} \right] - \frac{1}{4} \sum_{t=1}^T \mathbb{E}\left[\norm{\nabla f(\bx_t)}^2\right] \nonumber\\
    &\le L\sigma \bar{B}_T \sqrt{T} + L\bar{B}_T \sqrt{\rho} \sqrt{\sum_{t=1}^T \E\left[\norm{\nabla f(\bx_t)}^2\right]}
    - \frac{1}{4} \sum_{t=1}^T \mathbb{E}\left[\norm{\nabla f(\bx_t)}^2\right]\nonumber\\
    &\le L\sigma \bar{B}_T \sqrt{T} + L^2\bar{B}_T^2 \rho~. \label{eq:boundn5}
\end{align}
Using \eqref{eq:boundn4} and \eqref{eq:boundn5} in \eqref{eq:boundn6}, we have
\begin{align*}
    \frac{1}{2} \sum_{t=1}^T \mathbb{E}\left[\norm{\nabla f(\bx_t)}^2\right] &\le \frac{\gamma}{4}+\frac{\Delta_1}{\eta}+2\mathbb{E}\left[\left(\frac{L}{\eta}B_T+2L\eta\right)^2 \right] + \sqrt{2}L\sigma \bar{B}_T \sqrt{T} + \sqrt{2}L^2\rho \bar{B}_T^2\\
    &= \frac{\gamma}{4}+ \frac{\Delta_1}{\eta}+\sqrt{2}L\sigma \bar{B}_T \sqrt{T} + \sqrt{2}L^2\left(\sqrt{2}+\rho\right) \bar{B}_T^2~. \qedhere
\end{align*}	    
\end{proof}

\subsection{Additional Helper Lemmas}
\begin{lemma} \label{lem:norm-decomposition}
Let $u, v \in \R^d$ and let $\lambda > 0$.  Then
\begin{equation} \label{eq:norm-decomposition}
    \norm{u + v}^{2} \geq (1 - \lambda) \norm{u}^{2} + (1 - \lambda^{-1}) \norm{v}^{2}.
\end{equation}
\end{lemma}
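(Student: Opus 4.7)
The plan is to prove this by expanding the squared norm and applying a weighted Young's inequality to the cross term. First I would write
\[
    \norm{u+v}^2 = \norm{u}^2 + 2\inn{u}{v} + \norm{v}^2,
\]
so the whole question reduces to producing a lower bound on $2\inn{u}{v}$ whose coefficients on $\norm{u}^2$ and $\norm{v}^2$ combine with the $1$'s above to yield exactly $(1-\lambda)$ and $(1-\lambda^{-1})$.

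The second step is to apply Young's inequality in its scaled form. Starting from the trivial nonnegativity
\[
    0 \le \norm{\sqrt{\lambda}\, u + \lambda^{-1/2}\, v}^2 = \lambda \norm{u}^2 + 2\inn{u}{v} + \lambda^{-1} \norm{v}^2,
\]
which uses $\lambda>0$ so that the square roots are real, I rearrange to obtain
\[
    2\inn{u}{v} \ge -\lambda \norm{u}^2 - \lambda^{-1}\norm{v}^2.
\]

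Plugging this back into the expansion gives
\[
    \norm{u+v}^2 \ge \norm{u}^2 + \norm{v}^2 - \lambda \norm{u}^2 - \lambda^{-1} \norm{v}^2 = (1-\lambda)\norm{u}^2 + (1-\lambda^{-1})\norm{v}^2,
\]
which is the claim. There is essentially no obstacle here; the only sanity check is that the statement does not require $\lambda \in (0,1)$: for $\lambda \ge 1$ one of the two coefficients $(1-\lambda)$ or $(1-\lambda^{-1})$ becomes nonpositive, but the inequality remains correct because it is just a rearrangement of the nonnegative square above. I would note in passing that equality holds when $v = -\lambda u$, which justifies the tightness of the constants chosen.
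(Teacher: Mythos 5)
Your proof is correct and follows essentially the same route as the paper: expand $\norm{u+v}^2$ and bound the cross term via a weighted Young's inequality, which you derive explicitly from the nonnegativity of $\norm{\sqrt{\lambda}\,u+\lambda^{-1/2}v}^2$. The only (immaterial) difference is the assignment of the weights $\lambda$ and $\lambda^{-1}$ to $\norm{u}^2$ and $\norm{v}^2$, which your choice actually aligns more cleanly with the stated inequality than the paper's intermediate step does; since $\lambda>0$ is arbitrary, both versions are equivalent.
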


\begin{proof}
Young's inequality says that for any $\lambda > 0$,
\[
    2 \inn{u}{v} \geq -\lambda^{-1}\norm{u}^{2} - \lambda \norm{v}^{2}.
\]
Insert this bound into the expansion
\[
    \norm{u + v}^{2} = \norm{u}^{2} + 2 \inn{u}{v} + \norm{v}^{2} ,
\]
yielding
\[
    \norm{u + v}^{2} \geq \norm{u}^2 - \lambda^{-1} \norm{u}^2 - \lambda  \norm{v}^2 +  \norm{v}^2 = 
    (1 - \lambda) \norm{u}^2 + (1 - \lambda^{-1}) \norm{v}^2~. \qedhere
\]
\end{proof}

\begin{lemma} \label{lem:offseting}
Let 
\[
    \eta_{t} = \frac{\eta}{1 + \alpha \sqrt{t}}, \quad \forall t \geq 1,
\]
with $\eta > 0$ and $\alpha \geq 0$. Then, we have
\begin{equation} \label{eq:offseting}
    \gamma_{t-1} - \gamma_{t} - \gamma_{t}^{2} \eta_{t} \geq 0, \quad \forall t \geq 0.
\end{equation}
\end{lemma}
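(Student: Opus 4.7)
My plan is to substitute the explicit form $\eta_t = \eta/(1+\alpha\sqrt{t})$ directly into the definition $\gamma_t = 1/\eta_t - 1/\eta_{t-1}$, yielding the closed form $\gamma_t = \alpha(\sqrt{t}-\sqrt{t-1})/\eta$ for $t \geq 1$. This turns the target inequality into a purely algebraic statement in $\sqrt{t},\sqrt{t-1},\sqrt{t-2}$, and $\alpha$, with no reference to the original optimization quantities. The case $\alpha=0$ is trivial ($\gamma_t\equiv 0$), so from here on I assume $\alpha>0$.

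First I would apply the rationalization identity $\sqrt{k}-\sqrt{k-1} = 1/(\sqrt{k}+\sqrt{k-1})$ to expose the telescoping structure, and multiply both sides of $\gamma_{t-1}-\gamma_t \geq \gamma_t^2\eta_t$ by $\eta/\alpha$. This transforms the inequality into
\[
    (2\sqrt{t-1}-\sqrt{t}-\sqrt{t-2})(1+\alpha\sqrt{t}) \;\geq\; \alpha(\sqrt{t}-\sqrt{t-1})^2.
\]
The factor $2\sqrt{t-1}-\sqrt{t}-\sqrt{t-2}$ is non-negative by concavity of $\sqrt{\cdot}$. Putting both sides over the common denominator $(\sqrt{t}+\sqrt{t-1})^2(\sqrt{t}+\sqrt{t-2})(\sqrt{t-1}+\sqrt{t-2})$, the inequality reduces to
\[
    2(1+\alpha\sqrt{t})(\sqrt{t}+\sqrt{t-1}) \;\geq\; \alpha(\sqrt{t}+\sqrt{t-2})(\sqrt{t-1}+\sqrt{t-2}).
\]
The coarse bound $\sqrt{t-2}\leq\sqrt{t-1}$ gives $(\sqrt{t}+\sqrt{t-2})(\sqrt{t-1}+\sqrt{t-2})\leq 2\sqrt{t-1}(\sqrt{t}+\sqrt{t-1})$, whereupon the goal collapses to the trivial inequality $1+\alpha\sqrt{t}\geq \alpha\sqrt{t-1}$.

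The main obstacle is purely bookkeeping at the boundary indices $t\in\{0,1\}$, where the definition of $\gamma_{-1}$ requires extending the step-size sequence below $t=1$. Adopting the convention used in the proof of Theorem~\ref{thm:exp}, namely $\eta_0=\eta_1$ and $\gamma_{-1}=0$, one gets $\gamma_0=0$ directly, and the $t=1$ case follows from a short direct substitution (the factor $(\sqrt{t}-\sqrt{t-2})$ appearing above is to be replaced by $\sqrt{1}=1$). Aside from these indexing checks, the argument reduces to one concavity bound and one elementary monotonicity step; no machinery beyond real-analytic manipulation of $\sqrt{\cdot}$ is needed.
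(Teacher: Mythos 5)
Your argument for the main case $t \geq 2$ is correct and is genuinely simpler than the paper's own proof. The paper decomposes the left-hand side as $\alpha A_t + \alpha^2\sqrt{t}\,B_t$ with $A_t = 2\sqrt{t-1}-\sqrt{t}-\sqrt{t-2}$ and $B_t = 4\sqrt{t-1}-\sqrt{t-2}-3\sqrt{t}+1/\sqrt{t}$, and controls $B_t$ by a calculus argument (introducing an auxiliary function $\psi$, computing $\psi(2)$, $\psi(\infty)$, and showing $\psi'\leq 0$). Your chain --- rationalize $\sqrt{k}-\sqrt{k-1}=1/(\sqrt{k}+\sqrt{k-1})$, clear the common denominator to reach $2(1+\alpha\sqrt{t})(\sqrt{t}+\sqrt{t-1}) \geq \alpha(\sqrt{t}+\sqrt{t-2})(\sqrt{t-1}+\sqrt{t-2})$, bound the right side crudely by $2\alpha\sqrt{t-1}(\sqrt{t}+\sqrt{t-1})$, and collapse to $1+\alpha\sqrt{t}\geq\alpha\sqrt{t-1}$ --- is purely algebraic and avoids the derivative computation entirely. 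That is a real improvement for the $t\geq 2$ step.

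Your boundary handling is incorrect, though. Neither convention you cite yields $\gamma_0 = 0$: from $\eta_0=\eta_1$ one gets $\gamma_1 = 1/\eta_1 - 1/\eta_0 = 0$, \emph{not} $\gamma_0 = 0$, while extending the step-size formula to $t=0$ gives $\eta_0 = \eta$ and $\gamma_1 = \alpha/\eta > 0$. More to the point, if $\gamma_0 = 0$ and $\alpha>0$, the $t=1$ inequality $\gamma_0-\gamma_1-\gamma_1^2\eta_1 \geq 0$ reads $-\gamma_1-\gamma_1^2\eta_1\geq 0$, which is false. The fix the paper uses is that $\gamma_0$ is a \emph{free} parameter (it defines the auxiliary surrogate $f_0$ in Lemma~\ref{lem:da-central-bound}), so the $t=1$ case is handled by choosing $\gamma_0 \geq \gamma_1 + \gamma_1^2\eta_1 = \alpha/\eta + \alpha^2/((1+\alpha)\eta)$, not by a substitution. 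Replace your "short direct substitution" with that explicit choice and the rest of your argument stands.
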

\begin{proof}
Given the choice of $\eta_{t}$, we have access to a closed form expression for $\gamma_{t}$ in $t$. Let's find it.
\[
    \gamma_{t} = \frac{1}{\eta_{t}} - \frac{1}{\eta_{t-1}} = \frac{\alpha}{\eta} \left(\sqrt{t} - \sqrt{t - 1}\right).
\]
We will first examine the base cases. For $t = 1$, we need to show that
\[
    \gamma_{0} - \gamma_{1} - \gamma_{1}^{2} \eta_{1} = \gamma_{0} - \frac{\alpha}{\eta} - \frac{\alpha^{2}}{(1 + \alpha) \eta} \geq 0.
\]
Since we have control over $\gamma_{0}$, we can always set it to be sufficiently large. For example, setting 
\[
    \gamma_{0} = \frac{\alpha}{\eta} + \frac{\alpha^{2}}{(1 + \alpha) \eta} + 1
\]
will do the trick. For $t \geq 2$, 
\[
    \text{LHS of \eqref{eq:offseting}} = \alpha \underbrace{\left(2 \sqrt{t - 1} - \sqrt{t} - \sqrt{t - 2}\right)}_{A_{t}} + \alpha^{2} \sqrt{t} \underbrace{\left(4 \sqrt{(t-1)} - \sqrt{(t-2)} - 3 \sqrt{t} + \frac{1}{\sqrt{t}}\right)}_{B_{t}}.
\]
Here we intentionally omitted the boring algebra. Notice $A_{t}$ is non-negative for concavity of $\sqrt{x}$:
\begin{gather*}
    \frac{\sqrt{t}}{2} + \frac{\sqrt{t - 2}}{2}  \geq \sqrt{\frac{t}{2} + \frac{t - 2}{2}} = \sqrt{t - 1} \\
    \implies A_{t} = \frac{\sqrt{t}}{2} + \frac{\sqrt{t - 2}}{2} - \sqrt{t - 1} \geq 0.
\end{gather*}
To see that $B_{t}$ is non-negative, let
\[
    \psi(\tau) := 4 \sqrt{\tau - 1} - \sqrt{\tau - 2} - 3 \sqrt{\tau} + \frac{1}{\sqrt{\tau}}, \quad \forall \tau \in (2, \infty).
\]
Now, 
\begin{gather*}
    \psi(2) = 4 - 3 \sqrt{2} + \frac{1}{\sqrt{2}} \approx 0.464 > 0, \\
    \psi(\infty) = 0.
\end{gather*}
To show that $B_{t}$ is non-positive, it suffices to show that $\psi'(\tau)$ is non-negative for $\tau \in (2, \infty)$. Differentiating $\psi(\tau)$ gives 
\begin{align*}
    \psi'(\tau) &= \frac{2}{\sqrt{\tau - 1}} - \frac{1}{2 \sqrt{\tau - 2}} - \frac{3}{2 \sqrt{\tau}} - \frac{1}{2 \tau^{3 / 2}} \leq 0.
\end{align*}
Thus, $\psi$ decreases monotonically from the positive value $\psi(2)$ down to 0, so $\psi(\tau) \geq 0$ for all $\tau \geq 2$. In particular, $B_t=\psi(t) \geq 0$ for every integer $t \geq 2$.
\end{proof}

\begin{lemma}\label{lem:tech}
	Consider the step sizes
	\[
	\eta_t = \frac{1}{L(1+\rho) (1+\rho+\alpha \sqrt{t})}~,
	\]
	where $\alpha \in (0,1]$, let $\gamma_t = \frac{1}{\eta_t}-\frac{1}{\eta_{t-1}}$, and $L_t = L+\gamma_t$. We have for any $t \ge 2$
	\begin{align}
		\frac{\eta_t L_t}{2} &\le 1\label{eq:c1}\\
		\eta_t^2 L_t &\le \frac{6}{5L(1+\rho) (1+\rho+\alpha \sqrt{t})^2} \label{eq:c4}\\
		\frac{1}{2} \eta_t -\frac{1}{4} \eta_t^2 L_t - \frac{1}{2}\rho \eta_t^2L_t &\ge \frac{\eta_t}{8}\label{eq:c2}\\
		\frac{\gamma_{t-1}-\gamma_t}{2} -\frac{1}{2}\gamma_t^2\eta_t + \frac{1}{4}\gamma_t^2\eta_t^2 L_t &\ge 0 ~.\label{eq:c3}
	\end{align}
\end{lemma}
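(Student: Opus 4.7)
Writing everything in terms of the explicit step size is the natural starting point. From $1/\eta_t = L(1+\rho)(1+\rho+\alpha\sqrt{t})$ we read off $\gamma_t = L(1+\rho)\alpha/(\sqrt{t}+\sqrt{t-1})$, which is positive and monotonically decreasing, and
\[
\eta_tL_t = \frac{1+(1+\rho)\alpha(\sqrt{t}-\sqrt{t-1})}{(1+\rho)(1+\rho+\alpha\sqrt{t})}.
\]
The plan is to split the four claims into two families: the ``size'' inequalities \eqref{eq:c1}, \eqref{eq:c4}, \eqref{eq:c2} all rearrange to upper bounds on $\eta_tL_t$ (or on the ratio $L_t/(L(1+\rho))$), while the ``offsetting'' inequality \eqref{eq:c3} is a positivity statement for $\gamma_{t-1}-\gamma_t-\gamma_t^2\eta_t$ (the remaining term $\tfrac{1}{4}\gamma_t^2\eta_t^2L_t$ being non-negative and safely discarded).

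For the first family, I would combine $\alpha\in(0,1]$, the sharp bound $\sqrt{t}-\sqrt{t-1}=1/(\sqrt{t}+\sqrt{t-1})\le 1/(1+\sqrt{2})$ valid for $t\ge 2$, and the denominator estimate $1+\rho+\alpha\sqrt{t}\ge 1+\rho$ to obtain
\[
\eta_tL_t \le \frac{1}{(1+\rho)^2} + \frac{\alpha}{(\sqrt{2}+1)(1+\rho)}.
\]
Inequality \eqref{eq:c1} is then immediate since the RHS is below $2$. For \eqref{eq:c2}, after rearranging to $\eta_tL_t\le 3/(2(1+2\rho))$, the verification reduces to showing that the quadratic $3(1+\rho)^2(\sqrt{2}+1)-2(1+2\rho)(2+\sqrt{2}+\rho)$ is non-negative, which I would check by a direct discriminant computation (the discriminant turns out to be negative, and the leading coefficient positive, so the polynomial has no real roots). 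For \eqref{eq:c4}, extracting the common factor $1/(L(1+\rho)(1+\rho+\alpha\sqrt{t})^2)$ transforms the claim into the scalar bound $L_t/(L(1+\rho))\le 6/5$, i.e.\ $(1+\rho)\alpha(\sqrt{t}-\sqrt{t-1})\le(1+6\rho)/5$, which is handled by the same square-root estimate together with $\alpha\le 1$.

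For \eqref{eq:c3}, the identity $\gamma_t\eta_t=1-\eta_t/\eta_{t-1}$ rewrites the target $\gamma_{t-1}-\gamma_t\ge\gamma_t^2\eta_t$ as $\gamma_{t-1}/\gamma_t\ge 2-\eta_t/\eta_{t-1}$, i.e.
\[
\frac{\sqrt{t}+\sqrt{t-1}}{\sqrt{t-1}+\sqrt{t-2}} \ge \frac{(1+\rho)+\alpha(2\sqrt{t}-\sqrt{t-1})}{1+\rho+\alpha\sqrt{t}}.
\]
Cross-multiplying yields a polynomial inequality in $\sqrt{t},\sqrt{t-1},\sqrt{t-2}$ whose dominant positive contribution, $(1+\rho)(2\sqrt{t-1}-\sqrt{t}-\sqrt{t-2})\ge 0$, comes from concavity of $\sqrt{\cdot}$, while the $\alpha$-dependent cross-terms must be absorbed by a direct rearrangement. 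The main obstacle lies exactly here: both $\gamma_{t-1}-\gamma_t$ and $\gamma_t^2\eta_t$ are of order $t^{-3/2}$ for large $t$, so \eqref{eq:c3} is asymptotically tight (up to a $\rho,\alpha$-dependent constant) and the proof cannot afford crude bounds. The cleanest route is the substitution $u=\sqrt{t}$ and a Taylor-style expansion of $2\sqrt{u^2-1}-u-\sqrt{u^2-2}$, verifying monotonicity in $t$ in the spirit of the argument used in Lemma~\ref{lem:offseting}.
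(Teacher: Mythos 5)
Your handling of \eqref{eq:c1} and \eqref{eq:c2} is fine and in the same spirit as the paper: both reduce to a scalar upper bound on $\eta_tL_t$, and your discriminant argument for the quadratic in $\rho$ is a clean way to finish \eqref{eq:c2} (the paper instead asserts that $h(\rho)=\tfrac{1+2\rho}{2(1+\rho)^2}+\tfrac{1+2\rho}{2\sqrt 2(1+\rho)+4}\le 3/4$). The issues are with \eqref{eq:c4} and \eqref{eq:c3}.

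Your proof of \eqref{eq:c4} has a genuine gap. The reduction to $(1+\rho)\alpha(\sqrt t-\sqrt{t-1})\le(1+6\rho)/5$ is exactly right, but it is \emph{not} ``handled by the same square-root estimate together with $\alpha\le 1$'': that estimate gives $(1+\rho)(\sqrt2-1)\le(1+6\rho)/5$, which at $\rho=0$ reads $\sqrt2-1\approx 0.414\le 1/5$ and is false. In fact your reduction exposes that \eqref{eq:c4} as written fails for small $t$: at $t=2$, $\rho=0$, $\alpha=1$ one has $L_2/(L(1+\rho))=1+(\sqrt2-1)=\sqrt2>6/5$. The paper's own proof of \eqref{eq:c4} makes a substitution error at exactly this point, replacing $\gamma_t$ (which equals $L(1+\rho)\alpha(\sqrt t-\sqrt{t-1})$) by $\tfrac{L(1+\rho)\alpha}{4(t-1)\sqrt t}$, which is their bound on $\gamma_t^2\eta_t$, a much smaller quantity. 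The statement can be repaired by weakening the constant to $2$ (since $\sqrt t-\sqrt{t-1}\le 1$ and $\alpha\le 1$ give $L_t\le 2L(1+\rho)$), which still suffices to prove Theorem~\ref{thm:exp} with a slightly larger numerical constant; but as submitted your verification is wrong, and you should not claim the square-root estimate closes it.

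For \eqref{eq:c3}, your restatement $\gamma_{t-1}/\gamma_t\ge 2-\eta_t/\eta_{t-1}$ is correct, but you stop at the hard step and your plan is also structurally harder than it needs to be. You propose to discard the positive term $\tfrac14\gamma_t^2\eta_t^2L_t$ and prove the strictly stronger inequality $\gamma_{t-1}-\gamma_t\ge\gamma_t^2\eta_t$; this is true (the $\alpha$-coefficient in your cross-multiplied polynomial works out to $2t-1-\sqrt{t(t-1)}-2\sqrt{t(t-2)}+\sqrt{(t-1)(t-2)}\sim 1/t>0$), but after discarding that term the simple one-sided bounds $\gamma_{t-1}-\gamma_t\ge\tfrac{L(1+\rho)\alpha}{4\sqrt t(t-1)}$ and $\gamma_t^2\eta_t\le\tfrac{L(1+\rho)\alpha}{4(t-1)\sqrt t}$ exactly cancel and give only $\ge 0$ with no slack, so you would need sharper estimates throughout. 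The paper's route is different and cheaper: it keeps the $+\tfrac14\gamma_t^2\eta_t^2L_t$ term, writes the middle two terms as $-\tfrac12\gamma_t^2\eta_t(1-\tfrac12\eta_tL_t)$, and then the same two matching bounds produce a nonnegative leftover proportional to $\eta_tL_t$. (The paper's write-up has a factor-of-$2$ slip in $\tfrac{\gamma_{t-1}-\gamma_t}{2}$ and a spurious claim that $\eta_tL_t\ge\tfrac1{1+\rho}$, but the argument goes through with the trivial $1-\tfrac12\eta_tL_t\le 1$.) Either way, you need to actually carry out the computation; the sketch as written is not a proof.
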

\begin{proof}
	Fix $t \ge 3$. Befor proving the results of the lemma, we use the following bounds on $\gamma_t$ and $\gamma_{t-1}-\gamma_t$. We have
	\begin{align}
		\gamma_{t} &= \frac{1}{\eta_t} - \frac{1}{\eta_{t-1}}\nonumber\\
		&= L(1+\rho) \left(1+\rho + \alpha \sqrt{t}\right)- L(1+\rho) \left(1+\rho + \alpha \sqrt{t-1}\right)\nonumber\\
		&= L(1+\rho) \alpha\left(\sqrt{t}-\sqrt{t-1}\right)\nonumber\\
		&= \frac{L(1+\rho)\alpha}{\sqrt{t}+\sqrt{t-1}} \le \frac{L(1+\rho)\alpha}{ 2\sqrt{t-1}}~.\label{eq:int1}
	\end{align}
	We also have
	\begin{align}
		\gamma_t^2 \eta_t &\le \frac{L^2(1+\rho)^2\alpha^2}{4(t-1)} \cdot \frac{1}{L(1+\rho) (1+\rho+\alpha \sqrt{t})}\nonumber\\
		&\le \frac{L(1+\rho) \alpha^2}{4(t-1)(1+\rho+\alpha \sqrt{t})} \nonumber\\
		&\le  \frac{L(1+\rho) \alpha}{4(t-1) \sqrt{t}}~.\label{eq:inter3}
	\end{align}	
	Moreover we have using \eqref{eq:int1}
	\begin{align*}
		c(\gamma_{t-1}-\gamma_t) &= \frac{L(1+\rho)\alpha}{(\sqrt{t-1}+\sqrt{t-2})}-\frac{L(1+\rho)\alpha}{(\sqrt{t}+\sqrt{t-1})}\\
		&= L(1+\rho)\alpha \left(\frac{1}{\sqrt{t-1}+\sqrt{t-2}}-\frac{1}{\sqrt{t}+\sqrt{t-1}}\right).
	\end{align*}
	Recall that 
	\begin{align*}
		\frac{1}{\sqrt{t-1}+\sqrt{t-2}}-\frac{1}{\sqrt{t}+\sqrt{t-1}} &= \frac{\sqrt{t}-\sqrt{t-2}}{(\sqrt{t-1}+\sqrt{t-2})(\sqrt{t}+\sqrt{t-1})}\\
		&= \frac{2}{(\sqrt{t}+\sqrt{t-2})(\sqrt{t-1}+\sqrt{t-2})(\sqrt{t}+\sqrt{t-1})}\\
		&\ge \frac{1}{4\sqrt{t}(t-1)}~.
	\end{align*}
	We conclude that 
	\begin{equation}\label{eq:inter2}
		\gamma_{t-1}-\gamma_t \ge \frac{L(1+\rho)\alpha}{4\sqrt{t}(t-1)}~.
	\end{equation}
	
	\noindent \textbf{Proof of bound \eqref{eq:c1}:} 
	We have
	\begin{align*}
		\eta_t L_t &\le \frac{1}{L(1+\rho) (1+\rho+\alpha \sqrt{t})} \left(L+\frac{L(1+\rho)\alpha}{\sqrt{t}}\right)\\
		&\le \frac{1}{1+\rho+\alpha \sqrt{t}} \left(\frac{1}{1+\rho}+ \frac{\alpha}{\sqrt{t}}\right)\\
		&= \frac{1/(1+\rho)}{1+\rho+\alpha \sqrt{t}} + \frac{\alpha }{\sqrt{t}(1+\rho+\alpha \sqrt{t})}\\
		&\le \frac{1}{(1+\rho)^2}+ \frac{\alpha}{\sqrt{t}(1+\rho)+ \alpha t}\\
		&\le \frac{1}{(1+\rho)^2}+ \frac{1}{\sqrt{t}(1+\rho)+ t}\le 2~,
	\end{align*}
	where we used $\alpha \in [0,1]$ and the fact that $x \to \frac{x}{\sqrt{t}(1+\rho)+xt}$ is increasing for positive numbers.
	
	\noindent \textbf{Proof of bound \eqref{eq:c4}:}
	We have
	\begin{align*}
		\eta_t^2 L_t &= \frac{1}{L^2(1+\rho)^2(1+\rho+\alpha \sqrt{t})^2} \left(L+\frac{L(1+\rho)\alpha}{4(t-1)\sqrt{t}}\right)\\
		&\le \frac{(6+\rho)}{5L(1+\rho)^2 (1+\rho+\alpha \sqrt{t})^2}\\
		&\le  \frac{6}{5L(1+\rho) (1+\rho+\alpha \sqrt{t})^2}
	\end{align*}

	\noindent \textbf{Proof of bound \eqref{eq:c2}:}
	We have
	\begin{align*}
		\eta_t-\frac{1}{2}\eta_t^2 L_t -\rho \eta_t^2L_t &= \eta_t \left(1- \left(\frac{1}{2} +\rho \right) \eta_tL_t\right)\\
		&\ge \eta_t \left(1- \frac{1+2\rho}{2} \left(\frac{1}{(1+\rho)^2}+\frac{1}{\sqrt{t}(1+\rho)+t}\right) \right)\\
		&\ge \eta_t \left(1- \frac{(1+2\rho)}{2(1+\rho)^2} -\frac{1+2\rho}{2\sqrt{2}(1+\rho)+4} \right)~.
	\end{align*}
	To conclude, we use that the maximal value of $h(\rho) = \frac{(1+2\rho)}{2(1+\rho)^2} +\frac{1+2\rho}{2\sqrt{2}(1+\rho)+4}$ over positive numbers in upper bounded by $3/4$.

	\noindent \textbf{Proof of bound \eqref{eq:c3}:}
	We have used the bound on $\gamma_{t-1}-\gamma_t$ given by \eqref{eq:inter2}, with the bound on $\gamma_t^2\eta_t$ given in \eqref{eq:inter3}.
	\begin{align*}
		\frac{\gamma_{t-1}-\gamma_t}{2} -\frac{1}{2}\gamma_t^2\eta_t + \frac{1}{4}\gamma_t^2\eta_t^2 L_t &\ge \frac{L(1+\rho)\alpha}{4\sqrt{t}(t-1)} -\frac{1}{2}\gamma_t^2 \eta_t \left(1-\frac{1}{2}\eta_t L_t\right)\\
		&\ge \frac{L(1+\rho)\alpha}{4\sqrt{t}(t-1)} -\frac{1}{2} \frac{L(1+\rho)\alpha}{4(t-1)\sqrt{t}} \left(1-\frac{1}{2} \frac{L}{L(1+\rho)} \right)\ge 0~,
	\end{align*}
	where we used in the last line $L_t \ge L$ and $\eta_t \le \frac{1}{L(1+\rho)}$ to have $\eta_tL_t \ge \frac{1}{1+\rho}$.

\end{proof}

\end{document}